\theoremstyle{plain}
\newtheorem{theorem}{Theorem}[section]
\newtheorem{corollary}[theorem]{Corollary}
\newtheorem{lemma}[theorem]{Lemma}
\begin{document}

\title{The spectrum and toughness of regular graphs}
\author{Sebastian M. Cioab\u{a} \footnote{Department of Mathematical Sciences, University of Delaware, Newark, DE 19716-2553, {\tt cioaba@math.udel.edu}} \, and Wiseley Wong \footnote{Department of Mathematics, University of California, San Diego, La Jolla, CA 92093-0112, {\tt w7wong@ucsd.edu}}}
\date{\today}
\maketitle

\begin{abstract}
In 1995, Brouwer proved that the toughness of a connected $k$-regular graph $G$ is at least $k/\lambda-2$, where $\lambda$ is the maximum absolute value of the non-trivial eigenvalues of $G$. Brouwer conjectured that one can improve this lower bound to $k/\lambda-1$ and that many graphs (especially graphs attaining equality in the Hoffman ratio bound for the independence number) have toughness equal to $k/\lambda$. In this paper, we improve Brouwer's spectral bound when the toughness is small and we determine the exact value of the toughness for many strongly regular graphs attaining equality in the Hoffman ratio bound such as Lattice graphs, Triangular graphs, complements of Triangular graphs and complements of point-graphs of generalized quadrangles. For all these graphs with the exception of the Petersen graph, we confirm Brouwer's intuition by showing that the toughness equals $k/(-\lambda_{min})$, where $\lambda_{min}$ is the smallest eigenvalue of the adjacency matrix of the graph.
\end{abstract}

\section{Introduction}

The toughness $t(G)$ of a connected graph $G$ is the minimum of $\frac{|S|}{c(G\setminus S)}$, where the minimum is taken over all vertex subsets $S$ whose removal disconnects $G$, and $c(G\setminus S)$ denotes the number of components of the graph obtained by removing the vertices of $S$ from $G$. A graph $G$ is called $t$-tough if $t(G)\geq t$. Chv\'{a}tal \cite{Chv} introduced this parameter in 1973 to capture combinatorial properties related to the cycle structure of a graph. The toughness of a graph is related to many other important properties of a graph such as Hamiltonicity, and the existence of various factors, cycles or spanning trees and it is a hard parameter to determine exactly (see the survey \cite{Ba}). Two of Chv\'{a}tal conjectures from \cite{Chv} motivated a lot of subsequent work. The first conjecture stated that there exists some $t_0>0$ such that any graph with toughness greater than $t_0$ is Hamiltonian. This conjecture is open at present time and Bauer, Broersma and Veldman \cite{BBV} showed that if such a $t_0$ exists, then it must be at least $9/4$. The second conjecture of Chv\'{a}tal asserted the existence of $t_1>0$ such that any graph with toughness greater than $t_1$ is pancyclic. This was disproved by several authors including Alon \cite{Al}, who showed that there are graphs of arbitrarily large girth and toughness. 
Alon's results relied heavily on the following theorem relating the toughness of a regular graph and its eigenvalues. If $G$ is a connected $k$-regular graph on $n$ vertices, we denote the eigenvalues of its adjacency matrix as follows: $k=\lambda_1>\lambda_2\geq \dots \geq\lambda_n$ and  we let $\lambda=\max(|\lambda_2|,|\lambda_n|)$.
\begin{theorem}[Alon \cite{Al}]\label{AlonThm}
If $G$ is a connected $k$-regular graph, then 
\begin{equation}\label{AlonIneq}
t(G)>\frac{1}{3}\left(\frac{k^2}{k\lambda+\lambda^2}-1\right).
\end{equation}
\end{theorem}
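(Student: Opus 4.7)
The plan is to apply the expander mixing lemma to partitions of the components of a putative vertex cut, combined with a case analysis on whether a single component is dominant. Fix any vertex cut $S \subset V(G)$ with $|S| = s$, and let $C_1, \ldots, C_c$ be the components of $G \setminus S$, sorted so that $n_1 := |C_1| \geq \cdots \geq n_c := |C_c|$; set $m := n - s$. Since the case $c = 1$ is vacuous, assume $c \geq 2$. The key tool is the expander mixing lemma: for disjoint $X, Y \subseteq V(G)$,
\begin{equation*}
\left| e(X, Y) - \frac{k\,|X|\,|Y|}{n} \right| \leq \lambda \sqrt{|X|\,|Y|}.
\end{equation*}
Whenever $X$ and $Y$ are disjoint unions of components of $G \setminus S$ we have $e(X, Y) = 0$, and the lemma collapses to $|X|\,|Y| \leq \lambda^2 n^2 / k^2$. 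Applied with $X = C_i$ and $Y = (V \setminus S) \setminus C_i$, this yields $n_i(m - n_i) \leq \lambda^2 n^2 / k^2$ for every $i$, which will be the main structural input.

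I would then split into two regimes according to the size of the largest component. If no $n_i$ exceeds roughly $m/3$, a greedy partition of the $C_i$'s into two groups (processed in decreasing order of size, each added to the currently smaller side) produces unions $X$ and $Y$ with $\min(|X|, |Y|) \geq m/3$; the expander mixing inequality then forces $m \leq C \lambda n / k$ for some explicit constant $C$, so that $s = n - m$ is large. If instead a single component $C_1$ dominates, the inequality $n_1(m - n_1) \leq \lambda^2 n^2 / k^2$ directly bounds $m - n_1$ from above, and since every remaining component contributes at least one vertex this simultaneously bounds $c - 1 \leq m - n_1$. An upper bound on $c$ purely in terms of the spectrum also comes for free from Hoffman's ratio bound: choosing one vertex from each component yields an independent set, so $c \leq \alpha(G) \leq n \lambda / (k + \lambda)$.

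The main obstacle is the bookkeeping required to combine the estimates from each regime into the precise inequality $s/c > \frac{1}{3}\left( k^2 / (k\lambda + \lambda^2) - 1 \right)$. The factor $1/3$ should emerge from the greedy-balancing step (which guarantees $m/3$ on each side, rather than the optimal $m/2$ that would yield a factor $1/2$ variant), while the denominator $\lambda(k + \lambda)$ matches Hoffman's bound, so the two inputs have to be balanced carefully against one another. Verifying that the resulting estimate reproduces the exact stated expression, rather than a weaker constant-factor variant, is where the delicate part of the calculation will lie.
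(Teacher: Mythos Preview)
The paper contains no proof of Theorem~\ref{AlonThm}; it is quoted from Alon~\cite{Al} purely as background, and the paper's own contributions (Theorems~\ref{main0} and~\ref{main1}) are proved by a different mechanism---eigenvalue interlacing applied to components of high average degree---rather than by the expander mixing lemma. There is therefore nothing in this paper against which to compare your proposal.

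For what it is worth, your ingredients (the expander mixing lemma applied to unions of components, together with the Hoffman ratio bound $c \le \alpha(G) \le n\lambda/(k+\lambda)$) are precisely the spectral inputs Alon uses in~\cite{Al}. You have, however, correctly flagged the real gap in your own plan: the two-regime dichotomy with a greedy $m/3$-balancing step is only heuristic, and you have not shown that it reproduces the exact expression $\tfrac{1}{3}\bigl(k^2/(k\lambda+\lambda^2)-1\bigr)$ rather than some weaker constant-factor variant. As written, this is a plausible strategy outline, not a proof.
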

Around the same time and independently, Brouwer \cite{Br} discovered slightly better relations between the toughness of a regular graph and its eigenvalues.
\begin{theorem}[Brouwer \cite{Br}]\label{Brthm}
If $G$ is a connected $k$-regular graph, then
\begin{equation}\label{Brbnd}
t(G)>\frac{k}{\lambda}-2.
\end{equation}
\end{theorem}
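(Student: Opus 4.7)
Let $S \subseteq V(G)$ be a cutset with $c \geq 2$ components $C_1,\dots,C_c$ of sizes $n_i$, and set $N = n-|S| = \sum_i n_i$. The goal is to show $|S|/c > k/\lambda - 2$. My plan is to combine Cauchy interlacing on a quotient matrix (controlling how edges leave each component) with the expander mixing lemma applied to disjoint unions of components (controlling how component sizes can be distributed).

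First, I would form the symmetric $(c{+}1)\times(c{+}1)$ quotient matrix $\tilde B$ of the partition $\{S,C_1,\dots,C_c\}$ defined by $\tilde B_{UV} = e(U,V)/\sqrt{|U||V|}$, and consider its principal $c\times c$ submatrix $D$ obtained by deleting the row and column for $S$. Because there are no edges between distinct components, $D$ is diagonal with entries $D_{ii}=k-e(C_i,S)/n_i$. Cauchy interlacing, applied twice, yields $\lambda_r(D)\le\lambda_r(A(G))\le\lambda$ for every $r\ge 2$, so at most one of the diagonal entries $D_{ii}$ exceeds $\lambda$. Equivalently, there is at most one ``exceptional'' component $C^*$ (of size $n^*$, possibly $n^*=0$) for which $e(C^*,S) < (k-\lambda)n^*$; the remaining $c-1$ components satisfy $e(C_i,S)\ge(k-\lambda)n_i$.

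Second, I would combine this with the degree count $\sum_i e(C_i,S)\le k|S|$ to get $(k-\lambda)(N-n^*)\le k|S|$, so writing $\beta=N-n^*$ one has $\beta\le k|S|/(k-\lambda)$. Applying the expander mixing lemma to the disjoint sets $C^*$ and $\bigsqcup_{i\ne*}C_i$, between which there are no edges, gives $kn^*\beta/n\le\lambda\sqrt{n^*\beta}$, i.e., $n^*\beta\le (\lambda n/k)^2$. Since each non-exceptional component contains at least one vertex, one also has $c\le 1+\beta$.

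Finally, I would combine the inequalities $\beta\le k|S|/(k-\lambda)$, $n^*\beta\le(\lambda n/k)^2$, $n^*+\beta=N$, $|S|+N=n$, and $c\le 1+\beta$ to deduce $|S|/c > k/\lambda-2$. I expect the main obstacle to be this last combination: the constraints are of mixed linear-and-multiplicative type, and one must balance the two regimes---when the exceptional component engulfs nearly all of $V\setminus S$ (so $\beta$ is small and the expander-mixing bound from Step 3 is binding) versus when the components are well-balanced (so Step 2 does the work)---to extract the precise additive constant $-2$. This constant appears to arise from combining the Cauchy-interlacing gap $k-\lambda$ with the expander-mixing gap $\lambda n/k$; the fact that Brouwer's conjectured improvement to $k/\lambda - 1$ leaves a gap of one suggests that at least one of the two inequalities is being used below full strength.
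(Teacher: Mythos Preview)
First, note that the paper does not supply its own proof of this theorem: it is quoted from Brouwer's paper \cite{Br} as background, so there is no in-paper argument to compare against. I will therefore assess your proposal on its own merits.

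Your Steps 1--3 are sound. The interlacing argument does show that the diagonal quotient $D$ has at most one entry exceeding $\lambda$, so at most one component $C^*$ is ``exceptional''; the edge count then gives $\beta \le k|S|/(k-\lambda)$; and expander mixing applied to $C^*$ versus the remaining components gives $n^*\beta \le (\lambda n/k)^2$.

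The genuine gap is in Step 4, and it is not merely that you have left the algebra undone: the four inequalities you list are \emph{not} sufficient to force $|S|/c > k/\lambda - 2$. Here is a set of parameters consistent with all four constraints but violating the conclusion. Take $k=10$, $\lambda = 1$ (so the target is $k/\lambda - 2 = 8$), $n = 1000$, $|S| = 8$, $c = 2$, $\beta = 8$, $n^* = 984$. Then
\[
\beta = 8 \le \tfrac{k|S|}{k-\lambda} = \tfrac{80}{9},\qquad
n^*\beta = 7872 \le \Bigl(\tfrac{\lambda n}{k}\Bigr)^2 = 10000,\qquad
c = 2 \le 1+\beta,
\]
yet $|S|/c = 4 < 8$. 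So no amount of case-balancing will close the argument from these inequalities alone.

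The culprit is the \emph{weak} form of the expander mixing lemma you invoked. The sharper form (which follows from the same Rayleigh-quotient computation, keeping the exact norms $\|u_A\|^2 = |A|(n-|A|)/n$) gives, for disjoint $A,B$ with $e(A,B)=0$,
\[
|A|\,|B| \;\le\; \frac{\lambda^2}{k^2}\,(n-|A|)(n-|B|),
\]
equivalently $n^*\beta(1-\mu^2) \le \mu^2\,|S|\,n$ with $\mu=\lambda/k$. In the numerical example above this sharper inequality reads $7872 \le 0.01\cdot 16\cdot 992 \approx 159$, which is false, so the configuration is correctly excluded. Brouwer's original proof in \cite{Br} is built around precisely this sharper inequality (applied to pairs of components, showing that at most one component can have size exceeding $\lambda n/(k+\lambda)$), and the additive constant $-2$ emerges from it after a short calculation. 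If you replace your third bullet by the sharp inequality and redo Step~4, the argument can be completed; with the weak version it cannot.
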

Brouwer \cite{Br1} conjectured that the lower bound of the previous theorem can be improved to $t(G) \ge \frac{k}{\lambda}-1$ for any connected $k$-regular graph $G$. This bound would be best possible as there exists regular bipartite graphs with toughness very close to $0$. Brouwer \cite{Br1} mentioned the existence of such graphs, but did not provide any explicit examples. At the suggestion of one of the referees, we briefly describe a construction of such graphs here. Take $k$ disjoint copies of the bipartite complete graph $K_{k,k}$ without one edge, add two new vertices and make each of these new vertices adjacent to one vertex of degree $k-1$ in each copy of $K_{k,k}$ minus one edge. The resulting graph is bipartite $k$-regular and has toughness at most $2/k$ since deleting the two new vertices creates $k$ components.

Liu and Chen \cite{LC} found some relations between the Laplacian eigenvalues and the toughness of a graph. They also improved the eigenvalue conditions of Alon and Brouwer for guaranteeing $1$-toughness.
\begin{theorem}[Liu and Chen \cite{LC}]\label{LCThm}
If $G$ is a connected $k$-regular graph and 
\begin{equation}
\lambda_2<
\begin{cases}
k-1+\frac{3}{k+1}, k \text{ even }\\
k-1+\frac{2}{k+1}, k \text{ odd }
\end{cases}
\end{equation}
then $t(G)\geq 1$.
\end{theorem}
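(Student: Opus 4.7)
The plan is to argue the contrapositive: assume $t(G)<1$ and deduce $\lambda_2(G) \ge k - 1 + \frac{c_k}{k+1}$. So fix a vertex cut $S \subseteq V(G)$ with $s := |S|$ and $c := c(G-S) \ge s+1$; label the components $C_1,\dots,C_c$ and set $c_i := |C_i|$, $e_i := e(C_i, S)$.

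My main tool will be Cauchy interlacing applied to the principal submatrix $A[V(G) \setminus S]$, which is block-diagonal with blocks $A[C_i]$. This gives
\[ \lambda_2(G) \;\ge\; \theta_2 \;\ge\; \mu_{(2)}, \]
where $\theta_2$ is the second-largest eigenvalue of the submatrix and $\mu_{(2)}$ is the second-largest value among the Perron eigenvalues $\mu_1(C_1),\ldots,\mu_1(C_c)$. Since $G$ is connected each $e_i \ge 1$, so no $C_i$ is $k$-regular and Perron--Frobenius gives the strict bound $\mu_1(C_i) > \bar d(C_i) = k - e_i/c_i$. It therefore suffices to exhibit \emph{two} indices $i$ with $e_i/c_i \le 1 - \tfrac{c_k}{k+1} = \tfrac{k+1-c_k}{k+1}$.

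To produce two such ``good'' components I plan to use the edge-budget identity $\sum_i e_i = ks - 2e(S) \le ks$ together with standard structural lower bounds on $e_i$: an isolated vertex contributes $e_i = k$, $C_i = K_2$ contributes $2(k-1)$, $C_i = K_3$ contributes $3(k-2)$, and more generally any $C_i$ with $c_i \le k$ satisfies $e_i \ge c_i(k - c_i + 1)$. These quantities are all of order $k$, so together with $c \ge s+1$ they force most of the $ks$ budget to be consumed by small components, leaving at least two components ``large'' with $e_i$ near its minimum. For $k$ even, the parity identity $e_i \equiv k c_i \pmod 2$ forces $e_i \ge 2$; the extremal case $(c_i, e_i) = (k+1, 2)$, realised by $K_{k+1}$ minus an edge, gives $e_i/c_i = 2/(k+1) \le (k-2)/(k+1)$ and yields the constant $c_k = 3$. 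For $k$ odd, $e_i = 1$ is attainable only when $c_i \ge k+2$ (by a direct degree-sequence argument), giving $e_i/c_i = 1/(k+2) \le (k-1)/(k+1)$ and yielding $c_k = 2$.

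The main obstacle will be the case analysis needed to confirm that at least two ``good'' components always exist: one must rule out every configuration in which at most one component satisfies $e_i/c_i \le (k+1-c_k)/(k+1)$. In each such configuration (dominated by isolated vertices and small cliques, with at most one exceptional large component), the combined inequalities $\sum_i e_i \le ks$ and $c \ge s+1$ should produce a contradiction in the edge count, completing the proof.
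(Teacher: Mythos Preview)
The paper does not actually prove this statement; it is quoted from Liu and Chen \cite{LC}. The closest comparison is the paper's proof of the stronger Theorem~\ref{main1} (and of Theorem~\ref{main0}), which the authors explicitly describe as ``similar to the one of Liu and Chen.'' That proof follows exactly the strategy you outline: argue the contrapositive, use the edge budget $\sum_i e_i\le ks$ together with $c\ge s+1$ to locate two components with $e_i<k$, observe that $c_i\le k$ would force $e_i\ge c_i(k-c_i+1)\ge k$ so that both good components satisfy $c_i\ge k+1$, and finish by interlacing on the disjoint union of those two components. The only difference is that for Theorem~\ref{main1} the paper bounds $\lambda_1(H_i)$ via the sharper Lemma~\ref{minXk} rather than the average-degree bound; your average-degree argument is exactly what is needed for the Liu--Chen bound. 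So your plan is correct and essentially coincides with the paper's.

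Two clarifications. First, your justification for the strict inequality $\mu_1(C_i)>\bar d(C_i)$ is wrong: equality $\mu_1=\bar d$ holds precisely when $C_i$ is regular (of \emph{some} degree), not merely when it fails to be $k$-regular; e.g.\ $C_i=K_2$ with $k=3$ is $1$-regular and has $\mu_1=\bar d=1$. Fortunately you only need the non-strict bound $\mu_1\ge\bar d$, so this does no damage. Second, the ``case analysis'' you flag as the main obstacle is far simpler than your description suggests and requires no examination of extremal configurations like $(c_i,e_i)=(k+1,2)$ or $(k+2,1)$. A one-line pigeonhole (if at most one $e_i<k$ then $\sum_i e_i\ge (c-1)k+1>ks$) already yields two indices with $e_i\le k-1$; when $k$ is even the parity identity $e_i\equiv kc_i\equiv 0\pmod 2$ tightens this to $e_i\le k-2$. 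Combined with $c_i\ge k+1$ this gives $e_i/c_i\le (k-1)/(k+1)$ for $k$ odd and $e_i/c_i\le (k-2)/(k+1)$ for $k$ even, which is exactly the bound you need; no further casework is required.
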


In the first part of our paper, we improve Theorem \ref{AlonThm}, Theorem \ref{Brthm} and Theorem \ref{LCThm}  in certain cases. For small $\tau$, we obtain a better eigenvalue condition than Alon's or Brouwer's that implies a regular graph is $\tau$-tough. We also determine a best possible sufficient eigenvalue condition for a regular graph to be $1$-tough improving the above result of Liu and Chen. We note here that Bauer, van den Heuvel, Morgana and Schmeichel \cite{BHMS,BHMS1} proved that recognizing $1$-tough graphs is an NP-hard problem for regular graphs of valency at least $3$. Our improvements are the following two results. 
\begin{theorem}\label{main0}
Let $G$ be a connected $k$-regular graph on $n$ vertices, $k\geq 3$, with adjacency eigenvalues $k=\lambda_{1}>\lambda_{2}\ge \dots\ge \lambda_{n}$ and edge-connectivity $\kappa'$. If $\tau\leq \kappa'/k$ is a positive number such that $\lambda_{2}(G)<k-\frac{\tau k}{k+1}$, then $t(G)\ge \tau$.
\end{theorem}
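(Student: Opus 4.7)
The plan is to argue by contradiction, deriving $|S|\geq\tau\cdot c(G\setminus S)$ for every vertex cut $S$ directly from the edge-connectivity hypothesis $\tau\leq\kappa'/k$ together with $k$-regularity. Suppose $t(G)<\tau$; then there exists $S\subset V(G)$ with $c:=c(G\setminus S)\geq 2$ and $|S|<\tau c$. Write $C_{1},\dots,C_{c}$ for the components of $G\setminus S$ and $e(C_{i},S)$ for the number of edges between $C_{i}$ and $S$.

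The key observation is that, for each $i$, no edge of $G$ joins $C_{i}$ to any other component, so the edges between $C_{i}$ and $V\setminus C_{i}$ are exactly the edges between $C_{i}$ and $S$. Deleting them disconnects the nonempty set $C_{i}$ from the nonempty set $V\setminus C_{i}$, hence
\[
e(C_{i},S)\;\geq\;\kappa'\;\geq\;\tau k
\]
by the assumption $\tau\leq\kappa'/k$. Summing over the $c$ components, and using that each edge of $[S,V\setminus S]$ lies in exactly one $[C_{i},S]$,
\[
c\tau k\;\leq\;\sum_{i=1}^{c} e(C_{i},S)\;=\;e(S,V\setminus S)\;\leq\;k|S|,
\]
where the final inequality comes from $k$-regularity. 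Therefore $|S|\geq c\tau$, contradicting the choice of $S$, and so $t(G)\geq\tau$.

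The argument above is short and does not, in fact, invoke the spectral hypothesis $\lambda_{2}(G)<k-\tau k/(k+1)$; the whole combinatorial content is that a vertex cut into $c$ components gives $c$ independent edge cuts whose total size is capped by $k|S|$. I would expect the eigenvalue hypothesis to appear either (i) as a convenient sufficient condition which, via a spectral lower bound on $\kappa'$ for $k$-regular graphs (for instance, interlacing or the expander-mixing estimate $e(S,V\setminus S)\geq(k-\lambda_{2})|S|(n-|S|)/n$ applied to the partition $\{S,V\setminus S\}$), makes the assumption $\tau\leq\kappa'/k$ easy to verify, or (ii) for parallelism with the purely spectral bounds of Alon, Brouwer, and Liu--Chen recalled earlier. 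The main obstacle in executing the proof is genuinely minor: one only needs to recognize that vertex-disconnection yields an edge cut for each component separately, producing the extra factor of $c$ that drives the argument.
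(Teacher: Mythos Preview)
Your proof is correct, and in fact you have uncovered a redundancy in the theorem as stated: the conclusion $t(G)\geq\tau$ follows from the hypothesis $\tau\leq\kappa'/k$ alone, via exactly the edge-counting argument you give. Summing the $c$ edge-cut lower bounds $e(C_i,S)\geq\kappa'\geq\tau k$ against the regularity upper bound $e(S,V\setminus S)\leq k|S|$ yields $|S|\geq\tau c$ directly.

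The paper's proof takes a longer route: assuming $t(G)<\tau$, it uses a pigeonhole argument to locate two components $H_1,H_2$ with $e(H_i,S)\leq\tau k$, shows $|H_i|\geq k+1$, bounds their average degrees below by $k-\tau k/(k+1)$, and then interlaces to conclude $\lambda_2(G)\geq k-\tau k/(k+1)$. But the pigeonhole step already contains your contradiction implicitly: since every $t_i\geq\kappa'\geq\tau k$, the inequality $\sum t_i\leq ks<\tau k r$ cannot coexist with $\sum t_i\geq r\tau k$. So the spectral machinery is never actually reached under the stated hypotheses.

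What the paper's approach buys is the template for Theorem~\ref{main1}, where no edge-connectivity hypothesis is present and one only has the trivial bound $t_i\geq 1$. There your summation argument gives merely $|S|\geq r/k$, which is useless for $1$-toughness; the paper's method of isolating two components with few outgoing edges, bounding their spectral radii, and interlacing is then genuinely needed. In Theorem~\ref{main0}, however, your argument is both shorter and strictly stronger, and your diagnosis that the spectral hypothesis is superfluous is accurate.
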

\begin{theorem}\label{main1}
If $G$ is a connected $k$-regular graph and 
\begin{equation}\label{main1eq}
\lambda_2(G)<
\begin{cases}
\frac{k-2+\sqrt{k^2+8}}{2} \text{ when $k$ is odd} \\
\frac{k-2+\sqrt{k^2+12}}{2} \text{ when $k$ is even}
\end{cases}
\end{equation}
then $t(G)\geq 1$.
\end{theorem}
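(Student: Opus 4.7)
The plan is proof by contradiction. Suppose $t(G) < 1$; then there exists $S \subseteq V(G)$ with $s := |S| < c := c(G\setminus S)$. Let $C_1,\dots,C_c$ be the components of $G\setminus S$, with $c_i := |C_i|$, $e_i := e(C_i,S)$, and $E := \sum_i e_i = ks - 2|E(G[S])| \le ks$. I would aim to show that the hypothesis on $\lambda_2$ is violated, i.e.\ that $\lambda_2(G)$ is at least the threshold in \eqref{main1eq}.

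The first input is a parity refinement: since $kc_i - e_i = 2|E(G[C_i])|$ is even, $e_i \equiv kc_i \pmod 2$, which together with $e_i \ge 1$ (as $G$ is connected) and $e_i = k$ when $c_i = 1$ yields $e_i \ge 2$ whenever $k$ or $c_i$ is even and $e_i \ge 1$ otherwise. This refinement---sharper than the ``$e_i \ge 1$'' input of the proof of Theorem~\ref{main0}---is what produces the odd/even split in the conclusion. The second input is Haemers' eigenvalue interlacing applied to the three-part partition $\Pi = \{S,\,C_i,\,V\setminus(S\cup C_i)\}$ for a carefully chosen component $C_i$. A direct calculation shows that the $3\times 3$ quotient matrix has $k$ as its top eigenvalue and that its other two are $k - \nu_\pm$, where $\nu_\pm$ are the roots of $\nu^2 - T\nu + P$ with
\[
T = \frac{E}{s} + \frac{e_i}{c_i} + \frac{E-e_i}{n-s-c_i}, \qquad P = \frac{e_i(E-e_i)\,n}{s\,c_i\,(n-s-c_i)}.
\]
Interlacing forces $\delta := k - \lambda_2(G)$ to satisfy $\delta \le \nu_-$, equivalently $\delta(T-\delta) \le P$ together with $\delta \le T/2$.

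The key step is to combine this inequality with the parity-minimal lower bounds on the $e_j$'s, the combinatorial estimate $E \ge c \ge s+1$ (for $k$ odd) or $E \ge 2c \ge 2s+2$ (for $k$ even), and the ceiling $E \le ks$, then optimize over the choice of $C_i$. The desired outcome is the quadratic inequality
\[
\delta^2 - (k+2)\,\delta + (k-1) \ge 0 \quad (k\ \text{odd}), \qquad \delta^2 - (k+2)\,\delta + (k-2) \ge 0 \quad (k\ \text{even}),
\]
which (since $\delta \le T/2$ is bounded well below the larger root) forces $\delta$ below the smaller root---exactly the negation of the hypothesis.

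The main obstacle is the case analysis over the choice of $C_i$. I expect the extremal configurations to pair a parity-saturated component ($e_i = 1$ with odd $c_i \ge 3$ for $k$ odd, or $e_i = 2$ with $c_i = 2$ for $k$ even) against a collection of sibling components that pushes $E$ close to $ks$. A separate subcase must handle a singleton component ($c_i = 1$, so $e_i = k$ is forced), which should yield an at-least-as-strong bound. The technical delicacy lies in tracking the full $n$-dependence of $T$ and $P$ so that the correct coefficients $k+2$ and $k-1$ (or $k-2$) emerge, and in showing that the edge-connectivity side hypothesis of Theorem~\ref{main0} is not needed here---it is the parity argument that compensates.
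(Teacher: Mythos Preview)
Your proposal has a genuine gap: the three-part quotient matrix on $\{S,\,C_i,\,V\setminus(S\cup C_i)\}$ cannot deliver the sharp threshold $\theta(k)$. To see this concretely, take $k=3$ and test your inequality on the extremal graph $\mathcal{G}_3$ of Section~\ref{examples}: there $s=2$, $c=3$, $n=14$, every $c_i=4$, every $e_i=2$, and $E=6$. Your formulas give $T=4$ and $P=7/4$, hence $\nu_-=\tfrac{1}{2}$, so the best you can extract is $\lambda_2(G)\ge k-\nu_-=\tfrac{5}{2}$. But the true value is $\lambda_2(\mathcal{G}_3)=\theta(3)=\tfrac{1+\sqrt{17}}{2}\approx 2.56$, and since $t(\mathcal{G}_3)<1$, the contrapositive requires you to push $\lambda_2$ all the way up to $\theta(3)$. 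The same shortfall persists for every $k$: the $(C_i,C_i)$ entry of your quotient matrix is just the average degree $k-e_i/c_i$ of $C_i$, and a routine check shows $k-\tfrac{k-1}{k+1}<\theta(k)$ for all $k\ge 3$. In other words, compressing $C_i$ to a single block discards exactly the structural information that distinguishes $\theta(k)$ from the average-degree bound, so no amount of optimizing over $i$ or over the parity-refined $e_j$'s will recover the target quadratics $\delta^2-(k+2)\delta+(k-1)\ge 0$ (odd) or $\delta^2-(k+2)\delta+(k-2)\ge 0$ (even).

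The paper's argument avoids this by \emph{not} collapsing $C_i$ to a part. It first uses your pigeonhole step (at least two components $H_1,H_2$ have $e_i<k$, forcing $|H_i|\ge k+1$) and your parity observation (so $e_i\le k-1$ for $k$ odd, $e_i\le k-2$ for $k$ even), then applies Cauchy interlacing to the induced subgraph $H_1\cup H_2$ to get $\lambda_2(G)\ge\min\bigl(\lambda_1(H_1),\lambda_1(H_2)\bigr)$. The decisive input is the extremal Lemma~\ref{minXk}: over all connected irregular graphs of maximum degree $k$, order at least $k+1$, with at least two (resp.\ three) vertices of full degree $k$, the minimum spectral radius is exactly $\theta(k)$, achieved uniquely by $X_k=\overline{M_{k-1}}\vee K_2$ (resp.\ $\overline{M_{k-2}}\vee K_3$). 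That lemma is what converts the edge-count and parity information on $H_i$ into the precise value $\theta(k)$; your three-block partition is too coarse a substitute for it.
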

The proofs of Theorem \ref{main0} and Theorem \ref{main1} are similar to the one of Liu and Chen \cite{LC} and are contained in Section \ref{main0main1}. We show that Theorem \ref{main1} is best possible in the sense that for each $k\geq 3$, we construct examples of $k$-regular graphs whose second largest eigenvalue equals the right hand-side of inequality \eqref{main1eq}, but whose toughness is less than $1$. These examples are described in Section \ref{examples}. Our examples are regular graphs of diameter $4$ and their existence also answers a question of Liu and Chen \cite[p. 1088]{LC} about the minimum possible diameter of a regular graph with toughness less than $1$.

In \cite{Br1}, Brouwer also stated that he believed that $t(G)=\frac{k}{\lambda}$ for many graphs $G$. Brouwer's reasoning hinged on the fact that a connected $k$-regular graph $G$ with $n$ vertices attaining equality in the Hoffman ratio bound (meaning that the independence number $\alpha(G)$ of $G$ equals  $\frac{n(-\lambda_{min})}{k-\lambda_{min}}$; see e.g. \cite[Chapter 3]{BH} or \cite[Chapter 9]{GR}) and having $\lambda=-\lambda_{min}$, is likely to have toughness equal to $k/\lambda=k/(-\lambda_{min})$. Brouwer argued that for such a graph $G$, $k/\lambda$ is definitely an upper bound for the toughness, (as one can take $S$ to be the complement of an independent set of maximum size $\frac{n\lambda}{k-\lambda}$ and then $t(G)\leq \frac{|S|}{c(G\setminus S)}=k/\lambda$) and Brouwer suggested that for many such graphs $k/\lambda$ is the exact value of $t(G)$.

In the second part of the paper, we determine the exact value of the toughness of several families of strongly regular graphs attaining equality in Hoffman ratio bound, namely the Lattice graphs, the Triangular graphs, the complements of the Triangular graphs and the complements of the point-graphs of generalized quadrangles. Moreover, for each graph $G$ above, we determine the disconnecting sets of vertices $S$ such that $\frac{|S|}{c(G\setminus S)}$ equals the toughness of $G$. We show that for all these graphs except the Petersen graph, the toughness equals $k/(-\lambda_{min})$, where $k$ is the degree of regularity and $\lambda_{min}$ is the smallest eigenvalue of the adjacency matrix. These results are contained in Section \ref{tsrg}.
In Subsection \ref{Lattice}, we prove that the Lattice graph $L_2(v)$ has toughness $v-1=k/(-\lambda_n)$ for $v\geq 2$. The Lattice graph $L_2(v)$ is the line graph of $K_{v,v}$, and is also known as the Hamming graph $H(2,v)$; it is a strongly regular graph with parameters $(v^2,2v-2,v-2,2)$. In Subsection \ref{triangular}, we prove that the Triangular graph $T_v$ has toughness $v-2=k/(-\lambda_n)$ for $v\geq 4$. The graph $T_v$ is the line graph of $K_v$; it is a strongly regular graph with parameters $\left(\binom{v}{2},2v-4,v-2,4\right)$. We remark here that these two results can be also deduced by combining a theorem of Matthews and Sumner \cite{MS} stating that the toughness of a connected noncomplete graph not containing any induced $K_{1,3}$, equals half of its vertex-connectivity, with a theorem of Brouwer and Mesner \cite{BM} stating that the vertex-connectivity of a connected strongly regular graph equals its degree. Our proofs are self-contained, elementary and we also determine all the disconnecting sets $S$ such that $\frac{|S|}{c(G\setminus S)}=t(G)$ for every $G$ in the family of Lattice graphs, Triangular graphs, complements of Triangular graphs or complements of point-graphs of generalized quadrangles. In Subsection \ref{compltriangular}, we prove that the complement $\overline{T_v}$ of the Triangular graph $T_v$ has toughness $v/2-1=k/(-\lambda_{min})$ for $v>5$. When $v=5$, the graph $\overline{T_5}$ is the Petersen graph whose toughness is $4/3<3/2=k/(-\lambda_{min})$. The complement $\overline{T_v}$ of the Triangular graph $T_v$ is a strongly regular graph with parameters $\left({v\choose 2},{v-2\choose 2},{v-4\choose 2},{v-3\choose 2}\right)$. In Section \ref{gqst}, we prove that the complement of the point-graph of a generalized quadrangle of order $(s,t)$ has toughness $st=k/(-\lambda_{min})$.  A generalized quadrangle is a point-line incidence structure such that any two points are on at most one line and if $p$ is  a point not on a line $L$, then there is a unique point on $L$ that is collinear with $p$. If every line contains $s+1$ points and every point is contained in $t+1$ lines, then we say the generalized quadrangle has order $(s,t)$. The complement of the point-graph of a generalized quadrangle is the graph with the points of the quadrangle as its vertices, with two points adjacent if and only if they are not collinear. This graph is a strongly regular graph with parameters $((s+1)(st+1), s^2t, t(s^2+1)-s(t+1),st(s-1))$. 


\section{Proofs of Theorem \ref{main0} and Theorem \ref{main1}}\label{main0main1}

We first give a short proof of Theorem \ref{main0}.
\begin{proof}[Proof of Theorem \ref{main0}]
We prove the contrapositive. Assume $\tau\leq \kappa'/k\leq 1$ is a positive number and that $G$ is a connected $k$-regular graph such that $t(G)<\tau$. Then there exists $S\subset V(G)$ of size $s$ such that $r=c(G\setminus S)>s/\tau\geq s$ (as $\tau\in (0,1]$). We denote by $H_1,\dots,H_r$ the components of $G\setminus S$. For $1\leq i\leq k$, let $n_i$ be the number of vertices in $H_i$ and let $t_i$ be the number of edges between $H_i$ and $S$. The number of edges between $S$ and $H_1\cup \dots \cup H_r$ is $t_1+\dots +t_r\leq ks<k\tau r$. Also, $t_i\geq \kappa'$ for $1\leq i\leq k$.

By pigeonhole principle, there exist at least two $t_i$'s such that $t_i\leq \tau k$. This is true since otherwise, if at least $r-1$ of the $t_i$'s are greater than $\tau k$, then $k\tau r>t_1+\dots +t_r\geq \kappa'+(r-1)\tau k=k\tau r+\kappa'-\tau k\geq k\tau r$, a contradiction. Without loss of generality, assume that $\max(t_1,t_2)\leq \tau k<k$. If $n_1\leq k$, then $t_1\geq n_1(k-n_1+1)\geq k$ which is a contradiction. Thus, $n_1\geq k+1$ and by a similar argument $n_2\geq k+1$. For $i\in \{1,2\}$, the average degree of the component $H_i$ is $\frac{kn_i-t_i}{n_i}=k-\frac{t_i}{n_i}\geq k-\frac{\tau k}{k+1}$. As the largest eigenvalue of the adjacency matrix of a graph is at least its average degree (see e.g. \cite[Proposition 3.1.2]{BH}), we obtain that $\min(\lambda_1(H_1),\lambda_1(H_2))\geq k-\frac{\tau k}{k+1}$. By eigenvalue interlacing (see e.g. \cite[Section 2.5]{BH}), we obtain
\begin{equation}
\lambda_2(G)\geq \lambda_2(H_1\cup H_2)\geq \min(\lambda_1(H_1),\lambda_1(H_2))\ge k-\frac{\tau k}{k+1}.
\end{equation}
This finishes our proof.
\end{proof}

In order to prove Theorem \ref{main1}, we need to do some preliminary work. If $t$ is a positive even integer, denote by $M_t$ the union of $t/2$ disjoint $K_2$s. If $G$ and $H$ are two vertex disjoint graphs, the join $G\vee H$ of $G$ and $H$ is the graph obtained by taking the union of $G$ and $H$ and adding all the edges between the vertex set of $G$ and the vertex set of $H$. The complement of a graph $G$ is denoted by $\overline{G}$.

Let $k\geq 3$ be an integer. Denote by $\mathcal{X}(k)$ the family of all connected irregular graphs with maximum degree $k$, order $n\geq k+1$, size $e$ with $2e\geq kn-k+1$ that have at least $2$ vertices of degree $k$ when $k$ is odd and at least $3$ vertices of degree $k$ when $k$ is even. Denote by $\theta(k)$ the minimum spectral radius of a graph in $\mathcal{X}(k)$.
\begin{lemma}
For $k\geq 3$, define
\begin{equation}
X_k=
\begin{cases}
\overline{M_{k-1}}\vee K_2 \text{ if $k$ is odd}\\
\overline{M_{k-2}}\vee K_3 \text{ if $k$ is even}
\end{cases}
\end{equation}
Then 
\begin{equation}
\lambda_1(X_k)=
\begin{cases}
\frac{k-2+\sqrt{k^2+8}}{2} \text{ if $k$ is odd}\\
\frac{k-2+\sqrt{k^2+12}}{2} \text{ if $k$ is even}.
\end{cases}
\end{equation}
\end{lemma}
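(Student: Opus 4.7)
The plan is to exploit the natural equitable partition that comes from the join structure. In each case, take $V_1=V(K_\ell)$ and $V_2=V(\overline{M_{k-\ell+1}})$, where $\ell=2$ if $k$ is odd and $\ell=3$ if $k$ is even. Since $\overline{M_{k-1}}$ is the cocktail-party graph on $k-1$ vertices and is $(k-3)$-regular (and similarly $\overline{M_{k-2}}$ is $(k-4)$-regular), and since every vertex of $V_1$ is joined to every vertex of $V_2$ by the definition of the join, every vertex of $V_i$ has the same number of neighbors in $V_j$ for all $i,j\in\{1,2\}$. So the partition is equitable.

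I would then simply write down the quotient matrix. For $k$ odd one gets
$$B_{\text{odd}}=\begin{pmatrix} 1 & k-1 \\ 2 & k-3 \end{pmatrix},$$
and for $k$ even
$$B_{\text{even}}=\begin{pmatrix} 2 & k-2 \\ 3 & k-4 \end{pmatrix}.$$
Both matrices satisfy $\operatorname{tr} B = k-2$, while $\det B_{\text{odd}}=-(k+1)$ and $\det B_{\text{even}}=-(k+2)$. The quadratic formula applied to the characteristic polynomials $x^2-(k-2)x-(k+1)=0$ and $x^2-(k-2)x-(k+2)=0$ gives larger roots $\frac{k-2+\sqrt{k^2+8}}{2}$ and $\frac{k-2+\sqrt{k^2+12}}{2}$ respectively, matching the claimed formulas.

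To finish, one must confirm that this larger root is actually $\lambda_1(X_k)$ and not merely some eigenvalue of $A(X_k)$. Since $X_k$ is a join it is connected, so by Perron--Frobenius its Perron eigenvector is strictly positive; summing this eigenvector over the vertices inside each of $V_1,V_2$ produces a strictly positive vector in $\mathbb{R}^2$ which (using the defining identity $AP=PB$ of an equitable partition with $P$ the characteristic matrix) is an eigenvector of $B^{T}$ with eigenvalue $\lambda_1(X_k)$. Applying Perron--Frobenius to the nonnegative irreducible matrix $B$ identifies this eigenvalue with the Perron root of $B$, i.e.\ the larger root computed above. No serious obstacle arises in this argument; the only step needing a second of care is checking equitability of the two-part partition, which is immediate from the regularity of each side of the join.
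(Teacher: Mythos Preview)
Your proof is correct and follows essentially the same approach as the paper: both use the natural two-part equitable partition $V_1=V(K_\ell)$, $V_2=V(\overline{M_{k-\ell+1}})$ and compute the largest eigenvalue of the resulting $2\times 2$ quotient matrix. The only difference is that the paper simply cites a reference for the fact that $\lambda_1(X_k)$ equals the spectral radius of the quotient matrix, whereas you spell out the Perron--Frobenius justification explicitly.
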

\begin{proof}
If $k\geq 3$ is odd, then the partition of the vertex set of $X_k$ into the $2$ vertices of degree $k$ and the $k-1$ vertices of degree $k-1$ is an equitable partition whose quotient matrix is $\begin{bmatrix} 1 & k-1\\ 2 & k-3\end{bmatrix}$. The largest eigenvalue of $X_k$ equals the largest eigenvalue of this quotient matrix which is $\frac{k-2+\sqrt{k^2+8}}{2}$ (see e.g. \cite[Section 2.3]{BH}). If $k$ is even, the proof is similar.
\end{proof}

It is clear that $X_k\in \mathcal{X}_k$ for any $k\geq 3$ and consequently, $\theta(k)\leq \lambda_1(X_k)$. In the next lemma, we show that actually $\theta(k)=\lambda_1(X_k)$ and that $X_k$ is the only graph in $\mathcal{X}_k$ whose largest eigenvalue is $\theta(k)$.

\begin{lemma}\label{minXk}
Let $k\geq 3$ be an integer. If $X\in \mathcal{X}_k\setminus \{X_k\}$ then $\lambda_1(X)>\lambda_1(X_k)$ and thus,
\begin{equation}
\theta(k)=\begin{cases}
\frac{k-2+\sqrt{k^2+12}}{2} \text{ if $k$ is even}\\
\frac{k-2+\sqrt{k^2+8}}{2} \text{ if $k$ is odd}
\end{cases}
\end{equation}
\end{lemma}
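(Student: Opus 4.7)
The plan is to split into two cases according to $n = |V(X)|$: the extremal case $n = k+1$, and the case $n \geq k+2$.

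\textbf{Case A ($n = k+1$):} Write $X = K_{k+1} - F$ with $F = \overline{X}$. The bound $2e \geq kn - k + 1$ forces $|F| \leq (k-1)/2$ (odd $k$) or $|F| \leq (k-2)/2$ (even $k$), and the hypothesis on degree-$k$ vertices translates to $F$ having at least $c_k$ isolated vertices, where $c_k = 2$ for odd $k$ and $c_k = 3$ for even $k$. Since adding an edge to $F$ strictly decreases $\lambda_1(K_{k+1} - F)$ by Perron--Frobenius (connectivity of $X$ is automatic because $K_{k+1}$ is so dense), we may assume $|F|$ is maximal. For such $F$, use the Perron eigenvector $v$ of $X_k$---which takes value $b$ on the $c_k$ vertices of degree $k$ and value $a<b$ on the remaining $k+1-c_k$ vertices of degree $k-1$---as a test vector on $V(X)$, with the $b$-entries placed on $c_k$ isolated vertices of $F$. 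Since every edge of $F$ then joins two $a$-vertices,
\[ x^T A(X) x \;=\; x^T A(K_{k+1}) x - 2|F|\,a^2 \;=\; x^T A(X_k) x \;=\; \lambda_1(X_k)\|x\|^2, \]
so $\lambda_1(X) \geq \lambda_1(X_k)$. Equality forces $x$ to be a $\lambda_1$-eigenvector of $A(X)$; checking the eigenvalue equation at each $a$-vertex then forces $F$ to be $1$-regular on its non-isolated part and to have exactly $c_k$ isolated vertices, i.e., $F = \overline{X_k}$ and $X = X_k$.

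\textbf{Case B ($n \geq k+2$):} Apply Hofmeister's inequality $\lambda_1(X)^2 \geq \frac{1}{n}\sum_i d_i^2$, which follows by taking $\mathbf{1}/\sqrt{n}$ as a test vector for $A^2$. Writing $\delta_i = k - d_i \geq 0$ and $D = \sum_i \delta_i$, the constraints $d_i \leq k$ and $\sum_i d_i \geq kn-k+1$ give $D \leq k-1$, while convexity of $t \mapsto t^2$ yields $\sum_i \delta_i^2 \geq D$ (attained when each nonzero $\delta_i$ equals $1$). Hence $\sum_i d_i^2 \geq nk^2 - (2k-1)D_{\max}$, where $D_{\max}$ is the maximum admissible value of $D$ ($= k-1$ or $k-2$ according to parity). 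Substituting,
\[ \lambda_1(X)^2 \;\geq\; k^2 - \frac{(2k-1)D_{\max}}{n} \;\geq\; k^2 - \frac{(2k-1)D_{\max}}{k+2}, \]
and a direct (if tight) polynomial comparison shows this strictly exceeds $\lambda_1(X_k)^2$ for all $k \geq 3$.

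The hardest step is ruling out equality in Case A: the Rayleigh bound only yields $\geq \lambda_1(X_k)$, so strict inequality requires a careful analysis pinning down exactly when the Perron eigenvector of $X_k$ becomes an eigenvector of $A(X)$---which turns out to happen only when $X = X_k$. A secondary subtlety in Case B is that the parity of the handshaking sum forces $D_{\max} = k-2$ rather than $k-1$ when $k$ is even (and also when both $k$ and $n$ are odd); this sharper value of $D_{\max}$ is essential, because the rougher bound $D_{\max} = k-1$ fails at $n = k+2$ for even $k$.
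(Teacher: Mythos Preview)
Your proposal is correct, and the overall case split ($n=k+1$ versus $n\ge k+2$) matches the paper's. The execution differs in both cases, and the paper's is shorter.

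\textbf{Case B.} The paper uses only the average-degree bound $\lambda_1(X)\ge 2e/n$. For odd $k$ this gives $\lambda_1(X)\ge k-\tfrac{k-1}{k+2}$, and a direct computation (the difference of squares turns out to be $4(k-1)^2/(k+2)^2$) shows this already exceeds $\lambda_1(X_k)$. Hofmeister's inequality is stronger than the average-degree bound, so your argument certainly works, but it is not needed and makes the final polynomial comparison you left unproved harder than necessary. (For even $k$ the paper simply cites \cite{CGH}; your treatment of that parity, with the observation $D_{\max}=k-2$, is a genuine addition.)

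\textbf{Case A.} The paper splits further: if $2e>kn-k+1$ the average-degree bound again suffices (for odd $k$ one gets $\lambda_1(X)\ge k-\tfrac{k-3}{k+1}>\lambda_1(X_k)$), while if $2e=kn-k+1$ the paper applies quotient-matrix interlacing to the partition $\{$degree-$k$ vertices$\}\cup\{$the rest$\}$, whose quotient matrix is $\bigl(\begin{smallmatrix}1&k-1\\2&k-3\end{smallmatrix}\bigr)$ with largest eigenvalue $\lambda_1(X_k)$; equality in interlacing forces the partition to be equitable, which happens iff $X=X_k$. Your Rayleigh-quotient argument with the Perron vector of $X_k$ is exactly this interlacing argument unpacked by hand: a vector constant on the two parts is precisely what the quotient-matrix method uses, and ``equitable partition'' is equivalent to your condition that every $a$-vertex has $d_F(v)=1$.

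Two small gaps worth patching: (i) when you reduce to maximal $|F|$ in Case A, you should note that the edges can be added among the $k+1-c_k$ non-designated vertices so that $F$ retains $c_k$ isolated vertices (this is where the test vector lives); (ii) in Case B, the asserted polynomial inequality deserves at least the one-line verification---it follows from the average-degree version, which your bound dominates.
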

\begin{proof}
When $k$ is even, this was proved in \cite[Theorem 2]{CGH} so for the rest of the proof, we assume that $k$ is odd.
Let $X\in \mathcal{X}_k\setminus \{X_k\}$. We will prove that $\lambda_1(X)>\lambda_1(X_k)$.

Assume first that $n\geq k+2$, where $n$ is the number of vertices of $X$. As $X\in \mathcal{X}_k$, we know that $2e\geq kn-k+1$, where $e$ is the number of edges of $X$. Because the largest eigenvalue of $X$ is at least the average degree of $X$, after some straightforward calculations we obtain that
\begin{equation}
\lambda_{1}(X) \ge \frac{2e}{n}=k-\frac{k-1}{n}\ge k-\frac{k-1}{k+2}>\frac{k-2+\sqrt{k^{2}+8}}{2}=\lambda_1(X_k).
\end{equation}
Assume now that $n=k+1$. If $2e>kn-k+1$, then, since $k$ is odd, we must have $2e\geq kn-k+3$. We obtain that 
\begin{equation}
\lambda_1(X)\geq \frac{2e}{n}\geq k-\frac{k-3}{k+1}>\frac{k-2+\sqrt{k^2+8}}{2}=\lambda_1(X_k).
\end{equation}

The only possible case remaining is $n=k+1$ and $2e=kn-k+1$. Let $V_{1}$ be a subset of two vertices of degree $k$ and let $V_{2}$ the complement of $V_1$. The quotient matrix of the partition of the vertex set into $V_1$ and $V_2$ is
\begin{equation}
B=\begin{bmatrix}
1 & k-1\\
2&k-3
\end{bmatrix}.
\end{equation}
Eigenvalue interlacing \cite[Section 2.5]{BH} gives 
\begin{equation}\label{int}
\lambda_1{(X)}\ge\lambda_{1}(B)=\frac{k-2+\sqrt{k^2+8}}{2}= \lambda_1{(X_{k})},
\end{equation}
with equality if and only if the partition $V_1\cup V_2$ is equitable. This happens if and only if $X=X_k$. As $X\in \mathcal{X}_k\setminus \{X_k\}$, we get that $\lambda_1(X)>\frac{k-2+\sqrt{k^2+8}}{2}=\lambda_1(X_k)$ which finishes our proof.
\end{proof}

We are ready now to prove Theorem \ref{main1}. The proof is similar to the one of Theorem \ref{main0}.
\begin{proof}[Proof of Theorem \ref{main1}]
We prove the contrapositive. We show that $t(G)<1$ implies that $\lambda_2(G)\geq \theta(k)$. If $t(G)<1$, then there exists an $S\subset V(G)$ of size $s$ such that $r=c(G\setminus S)>|S|=s$. As in the proof of Theorem \ref{main0}, let $H_1, H_2, ..., H_r$ be the components of $G\setminus S$. For $1\leq i\leq r$, denote by $t_i\ge 1$ denote the number of edges between $S$ and $H_i$ and by $n_i$ the number of vertices of $H_i$. Then $\sum_{i=1}^r t_i=e(S,V\setminus S) \le ks$. We claim there exists two $t_i$'s such that $t_i <k$.  Indeed, if there exists at most one such $t_i$, then at least $r-1$ of the $t_i$'s are greater than $k$ and thus, $ks\geq \sum_{i=1}^r t_i\ge (r-1)k+1>ks$ which is a contradiction. Thus, there exist at least two $t_i$'s (say $t_1$ and $t_2$) such that $\max(t_1,t_2)<k$. If $n_1\leq k$, then $t_1\geq n_1(k-n_1+1)\geq k$ which is a contradiction. Thus, $n_1\geq k+1$ and by a similar argument $n_2\geq k+1$. As $t_i\leq k-1$ for $i\in \{1,2\}$, we get $2e(H_i)=kn_i-t_i\geq kn_i-k+1$. Also, if $k$ is even, then $2e(H_i)=kn_i-t_i$ implies that $t_i$ is even. For $i\in \{1,2\}$, this means $t_i\leq k-2$ as $t_i<k$.  Hence, $\max(t_1,t_2)\leq k-2$ when $k$ is even and $\max(t_1,t_2)\leq k-1$ when $k$ is odd. This implies $H_i$ contains at least two vertices of degree $k$ when $k$ is odd and at least $3$ vertices of degree $k$ when $k$ is even. Thus, $H_i\in \mathcal{X}_k$ for $i\in \{1,2\}$. By Lemma \ref{minXk}, we get that $\min(\lambda_1(H_1),\lambda_1(H_2))\geq \theta(k)$. By Cauchy eigenvalue interlacing \cite[Section 2.5]{BH}, we obtain $\lambda_2(G)\geq \lambda_2(H_1\cup H_2)\geq \min(\lambda_1(H_1),\lambda_1(H_2))\geq \theta(k)$ which finishes our proof.
\end{proof}

The following is an immediate corollary of Theorem \ref{main1} that can be used to determine the toughness of the $n$-dimensional cube $H(n,2)$ and of even cycles. 
\begin{corollary}{\label{tcor}}
If $G$ is a bipartite $k$-regular graph with
\begin{equation}
\lambda_{2}(G)<
\begin{cases}
\frac{1}{2}\left(k-2+\sqrt{k^{2}+8}\right)& k \text{ is odd}\\
\frac{1}{2}\left(k-2+\sqrt{k^{2}+12}\right)& k \text{ is even},
\end{cases}
\end{equation}
then $t(G) = 1$.  
\end{corollary}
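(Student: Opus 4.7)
The plan is straightforward: combine Theorem \ref{main1}, which supplies the lower bound $t(G)\geq 1$ under the stated eigenvalue hypothesis, with a matching upper bound $t(G)\leq 1$ coming from the bipartite structure. There is essentially no obstacle; the only thing to check is that a suitable disconnecting set exists.

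First I would invoke Theorem \ref{main1} directly. The hypothesis on $\lambda_2(G)$ in the corollary is identical to the hypothesis in Theorem \ref{main1}, so we get $t(G)\geq 1$ with no work.

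Next I would produce an explicit disconnecting set witnessing $t(G)\leq 1$. Let $(A,B)$ be the bipartition of $G$. Since $G$ is $k$-regular, counting edges from each side gives $k|A|=k|B|$, so $|A|=|B|$. Taking $S=A$, the induced subgraph on $V(G)\setminus S=B$ has no edges (bipartiteness), so it consists of $|B|$ isolated vertices. Provided $|B|\geq 2$, the set $S$ disconnects $G$ and
\begin{equation*}
t(G)\;\leq\;\frac{|S|}{c(G\setminus S)}\;=\;\frac{|A|}{|B|}\;=\;1.
\end{equation*}
The condition $|B|\geq 2$ is automatic: the eigenvalue hypothesis forces $G$ to be non-complete (in particular $G\neq K_2$), and in any connected bipartite $k$-regular graph with $k\geq 2$ each part has at least $k\geq 2$ vertices. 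Combining the two inequalities yields $t(G)=1$.

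The only mild subtlety is handling small degeneracies (e.g.\ $k=1$, which would force $G=K_2$ and is excluded from toughness considerations anyway), but this is cosmetic. The substantive content of the corollary is entirely in Theorem \ref{main1}; the bipartite hypothesis is used solely to supply the trivial upper bound, and no further eigenvalue argument is needed.
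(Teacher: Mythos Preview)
Your proof is correct and follows exactly the same approach as the paper: invoke Theorem~\ref{main1} for the lower bound $t(G)\geq 1$, and take one side of the bipartition as $S$ to obtain $\frac{|S|}{c(G\setminus S)}=1$ for the matching upper bound. The paper's version is terser (it does not discuss the degenerate cases you mention), but the substance is identical.
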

\begin{proof}
Let $S$ be the set of vertices of one part of the bipartition.  Then $\frac{|S|}{c(G\setminus S)}=1$, and so $t(G)\le 1$.  Theorem \ref{main1} implies that $t(G)=1$. 
\end{proof}
We believe that Corollary \ref{tcor} can be improved by proving a similar result to Lemma \ref{minXk} for bipartite graphs.

\section{Examples showing Theorem \ref{main1} is best possible}\label{examples}

In this section, we construct $k$-regular graphs with second largest eigenvalue $\theta(k)$ that are not $1$-tough, for every $k\geq 3$. This shows that the bound contained in Theorem \ref{main1} is best possible.


\begin{figure}[h!]
\centering
\includegraphics[scale=0.7]{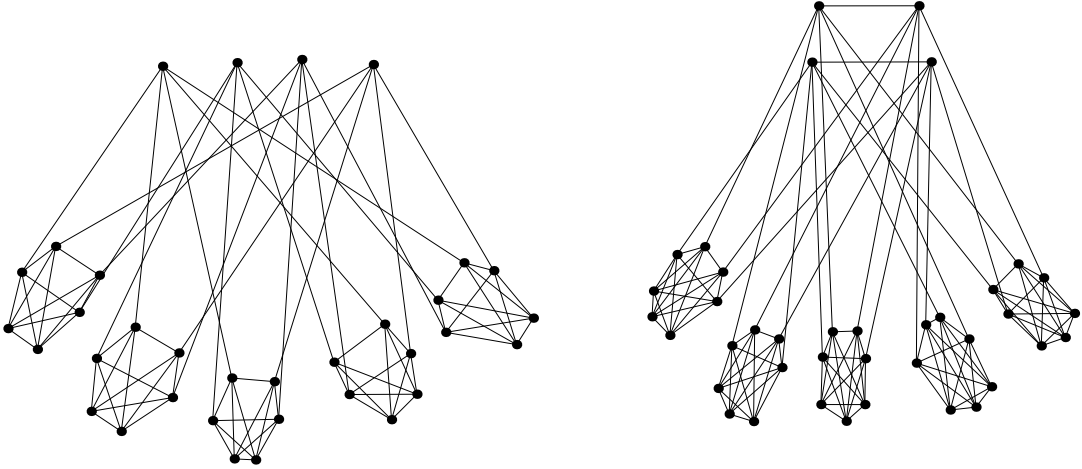}
\caption{The extremal examples of Theorem \ref{main1} for $k=5$ and $k=6$.}
\label{extremal56}
\end{figure}


If $k\geq 3$ is odd, consider $k$ pairwise vertex disjoint copies of $\overline{M}_{k-1}\vee K_{2}$. Consider also a set $T$ of $k-1$ isolated vertices. The graph $\mathcal{G}_k$ is obtained by adding a matching of size $k-1$ between the vertex set of $T$ and the $k-1$ vertices of degree $k-1$ in each of the $k-1$ copies of $\overline{M}_{k-1}\vee K_{2}$. The graph $\mathcal{G}_k$ is a connected $k$-regular graph on $k(k+1)+k-1=k^2+2k-1$ vertices. Figure \ref{extremal56} contains a picture of $\mathcal{G}_5$. By choosing $S\subset V(\mathcal{H}_{k})$ as the independent set $T$ of $k-1$ vertices, the graph $\mathcal{G}_k$ has toughness at most $\frac{k-1}{k}<1$.

We claim $\lambda_{2}(\mathcal{G}_{k}) = \frac{1}{2}\left(k-2+\sqrt{k^{2}+8}\right)$.  The vertices of $G= G_k= \overline{M}_{k-1}\vee K_{2}$ can be ordered so that its adjacency matrix is partitioned in the form
\vspace{.2in} 

$$A(G) = \begin{bmatrix}
A(\overline{M}_{k-1})&J\\
J^T&A(K_2)
\end{bmatrix},$$
where $J$ denotes an all-ones matrix of appropriate size.  The $n=k^2+2k-1$ vertices of $\mathcal{G}_k$ can be ordered so that its adjacency matrix is partitioned in the form 
$$A\left(\mathcal{G}_k \right) = \begin{bmatrix}
0&B&B&\cdots&B\\
B^T&A(G)&0&\cdots&0\\
B^T&0&A(G)&\cdots&0\\
\vdots&\vdots&\vdots&\ddots&\vdots\\
B^T&0&0&\cdots&A(G)
\end{bmatrix},$$
where 0 denotes a zero matrix of appropriate size, and $B=\begin{bmatrix}I&0\end{bmatrix}$, where $I$ is an identity matrix of order $k-1$.

The eigenvectors of $G_k$ that are constant on each part of its  2-part equitable partition have eigenvalues given by the quotient matrix
$$\begin{bmatrix}
k-3&2\\
k-1&1
\end{bmatrix}.$$
These are $\frac{1}{2}\left(k-2\pm\sqrt{k^2+8}\right)$, and the positive eigenvalue is $\lambda_1(G) = \theta(k)$ from Lemma \ref{minXk}.  Let $x$ and $y$ be eigenvectors of $G$ associated with these two eigenvalues.  

If $u$ is a column eigenvector of $A(G)$, consider the $k-2$ column $n$-vectors

$$\begin{bmatrix}
0\\
u\\
-u\\
\vdots\\
\vdots\\
0
\end{bmatrix},
\begin{bmatrix}
0\\
u\\
0\\
-u\\
\vdots\\
0
\end{bmatrix}, \cdots\cdots,
\begin{bmatrix}
0\\
u\\
0\\
0\\
\vdots\\
-u
\end{bmatrix},$$
where the zero vectors are of appropriate size, and the first zero vector is always present (corresponding to the $k-1$ isolated vertices).  It is straightforward to check that these $k-1$ vectors are linearly independent eigenvectors of $A(\mathcal{G}_k)$ with the same eigenvalues as $u$.  Thus each eigenvalue of $A(G)$ of multiplicity $m$ yields an eigenvalue of $A(\mathcal{G}_k)$ of multiplicity at least $m(k-1)$.  In particular, taking $u=x$ and $u=y$, we see that the two eigenvalues of $A(G)$,  $\frac{1}{2}\left(k-2\pm\sqrt{k^2+8}\right)$, yield eigenvalues of $A(\mathcal{G}_k)$ of multiplicity at least $k-1$ each. Thus, $\lambda_1(G) = \theta(k)$ is an eigenvalue of $\mathcal{G}_k$ with multiplicity at least $k-1$.

Now consider the $(2k+1)$-part equitable partition of $\mathcal{G}_k$ obtained by extending the 2-part partition of the $k$ copies of ${G}_k$ in $\mathcal{G}_k$.  Let $W$ be the space consisting of $n$-vectors that are constant on each part of the partition, which has dimension $2k+1$.  Note that each of the $2(k-1)$ independent eigenvectors of $\mathcal{G}_k$ inherited from the eigenvectors $x$ and $y$ of $G_k$ are in $W$.  The natural 3-part equitable partition of $\mathcal{G}_k$ has quotient matrix
$$\begin{bmatrix}
0&k&0\\
1&k-3&2\\
0&k-1&1
\end{bmatrix},$$
with eigenvalues $k$ and $-1\pm\sqrt{2}$.  Their corresponding eigenvectors lift to eigenvectors of $\mathcal{G}_k$ in $W$ with the same eigenvalues, and these three eigenvalues are different from those above.  Thus the three lifted eigenvectors, together with the previous $2(k-1)$ eigenvectors of $\mathcal{G}_k$ inherited from $G$, form a basis for $W$.

The remaining eigenvectors in a basis of eigenvectors for $\mathcal{G}_k$ may be chosen orthogonal to the vectors in $W$.  They may be chosen to be orthogonal to the characteristic vectors of the parts of the $(2k+1)$-part partition because the characteristic vectors are also a basis for $W$.  Therefore, they will be some of the eigenvectors of the matrix $A(\hat{\mathcal{G}_k})$ obtained from $A(\mathcal{G})$ by replacing each all-ones block in each diagonal block of $A(G)$ by an all-zeros matrix.  But $A(\hat{\mathcal{G}_k})$ is the adjacency matrix of a graph $\hat{\mathcal{G}_k}$ with $k+1$ connected components, one of which is the graph $\mathcal{G}'$ obtained by attaching $k$ copies of $\overline{M_{k-1}}$ to a set of $k-1$ independent vertices by perfect matchings.  Each of the remaining $k$ components is a copy of $K_2$.  It follows that the greatest eigenvalue of $\hat{\mathcal{G}_k}$ is that of a component of $\mathcal{G}'$.  Because $\mathcal{G}'$ has a 2-part equitable partition with quotient matrix
$$\begin{bmatrix}
0&k\\
1&k-3
\end{bmatrix},$$
its greatest eigenvalue is $\frac{1}{2}\left(k-3+\sqrt{k^2-6k+25}\right)$, which is less than $\frac{1}{2}\left(k-2+\sqrt{k^2+8}\right)= \theta(k)$.


If $k\geq 4$ is even, consider $k-1$ vertex disjoint copies of $\overline{M}_{k-2}\vee K_{3}$. Consider also a vertex disjoint copy $T$ of $M_{k-2}$. The graph $\mathcal{H}_{k}$ is obtained by adding a matching of size $k-2$ between the vertices in $T$ and the $k-2$ vertices of degree $k-1$ in each of the $k-1$ copies of $\overline{M}_{k-2}\vee K_{3}$. Figure \ref{extremal56} contains a picture of the graph $\mathcal{H}_6$. The graph $\mathcal{H}_k$ is a connected $k$-regular graph on $(k-1)(k+1)+k-2=k^2+k-3$ vertices. By choosing $S\subset V(\mathcal{H}_{k})$ as the vertices in $T$, it is easy to check that $\mathcal{H}_{k}$ has toughness at most $\frac{k-2}{k-1}<1$.

Similarly to the case $k$ odd, we can show that $\lambda_2(\mathcal{H}_{k})=  \frac{1}{2}\left(k-2+\sqrt{k^{2}+12 }\right).$  The full details of the proof of this fact can be found in the second author's Ph.D. Thesis \cite{WongThesis}. At the suggestion of one of the referees, we omit these details here.

\section{The toughness of some strongly regular graphs attaining equality in the Hoffman ratio bound}\label{tsrg}

In this section, we determine the toughness of several families of strongly regular graphs attaining equality in the Hoffman ratio bound. Except for the Petersen graph and the Triangular graph $T_v$ with $v$ odd, every graph described in the next four subsections attains equality in the Hoffman ratio bound. For each such graph, we have an upper bound of $k/(-\lambda_{min})$ for the toughness. In the next subsections, we prove that $k/(-\lambda_{min})$ is also a lower bound for the toughness of such graphs.


\subsection{The toughness of the Lattice graphs}{\label{Lattice}}

In this section, we determine the exact value of the toughness of the Lattice graph $L_2(v)$ for $v\geq 2$. The Lattice graph $L_2(v)$ is the line graph of the complete bipartite graph $K_{v,v}$ and is a strongly regular graph with parameters $(v^2,2v-2,v-2,2)$.  The spectrum of the adjacency matrix of $L_2(v)$ is $(2v-2)^{(1)}, (v-2)^{(2v-2)}, (-2)^{(v^2-2v+1)}$, where the exponents denote the multiplicities of the eigenvalues. 
\begin{theorem}\label{L2v}
For $v\geq 2, t(L_2(v)) = v-1$. Moreover, if $S$ is a subset of vertices of $L_2(v)$ such that $\frac{|S|}{c(G\setminus S)}=v-1$, then $S$ is the complement of a maximum independent set of size $v$, or $S$ is the neighborhood of some vertex of $L_2(v)$.
\end{theorem}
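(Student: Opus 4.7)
Plan. The upper bound $t(L_2(v)) \leq v-1$ is witnessed by either of the two extremal constructions that will appear in the characterization: taking $S$ to be the complement of any maximum independent set (an independent set of size $v$, corresponding to a permutation; Hoffman's ratio bound is tight here with $\alpha(L_2(v)) = v$) leaves $v$ isolated vertices and gives $|S|/c(G\setminus S) = (v^2 - v)/v = v-1$, while taking $S = N((i_0, j_0))$ leaves the isolated vertex $(i_0, j_0)$ together with a copy of $L_2(v-1)$ on the remaining $v-1$ rows and $v-1$ columns, giving $(2v-2)/2 = v-1$.

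For the lower bound and the characterization, the key structural fact I will exploit is that every row $R_i = \{(i,1), \dots, (i,v)\}$ and every column of $L_2(v)$ induces a clique of size $v$. Hence for any disconnecting $S$, the remainder $R_i \setminus S$ is a clique inside $L_2(v) \setminus S$ and must lie in a single component. This yields well-defined labellings $\phi, \psi : \{1,\dots,v\} \to \{0, 1, \dots, c\}$, where $c = c(L_2(v)\setminus S)$, with value $0$ if the row (resp.\ column) is contained in $S$ and value $\ell$ if its remainder lies in the $\ell$-th component $H_\ell$. A vertex $(i,j)$ then lies outside $S$ if and only if $\phi(i) = \psi(j) \neq 0$, and if so it lies in that component. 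Setting $a_\ell = |\phi^{-1}(\ell)|$ and $b_\ell = |\psi^{-1}(\ell)|$, the $\ell$-th component has exactly $a_\ell b_\ell$ vertices, so $|S| = v^2 - \sum_{\ell=1}^c a_\ell b_\ell$. The theorem reduces to the arithmetic inequality
\[
\sum_{\ell=1}^c a_\ell b_\ell \;\leq\; v^2 - c(v-1),
\]
subject to $\sum_{\ell=0}^c a_\ell = \sum_{\ell=0}^c b_\ell = v$, $a_\ell, b_\ell \geq 1$ for $\ell \geq 1$, and $2 \leq c \leq v$.

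I will prove this by substituting $a_\ell = 1 + a'_\ell$, $b_\ell = 1 + b'_\ell$ for $\ell \geq 1$ and using the trivial bound $\sum_\ell a'_\ell b'_\ell \leq (\sum_\ell a'_\ell)(\sum_\ell b'_\ell)$ (valid since $a'_\ell, b'_\ell \geq 0$), which reduces the claim to a bilinear inequality in the two free parameters $a_0, b_0 \in [0, v-c]$; a direct calculation shows the resulting quadratic is maximized on this rectangle at the corner $a_0 = b_0 = 0$ with value $0$, which is bounded above by $(c-2)(v-c) \geq 0$ for $2 \leq c \leq v$. The equality analysis is the crux of the characterization and the main obstacle: tightness forces $a_0 = b_0 = 0$, concentration of the $(a'_\ell, b'_\ell)$ on a common single index, and $c \in \{2, v\}$. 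For $c = v$ this yields $a_\ell = b_\ell = 1$ for all $\ell$, so the $v$ singleton components form a maximum independent set and $S$ is its complement; for $c = 2$ this yields $(a_1, b_1) = (1, 1)$ and $(a_2, b_2) = (v-1, v-1)$, so one component is a single vertex $(i_0, j_0)$ and $|S| = 2v-2 = |N((i_0, j_0))|$ forces $S = N((i_0, j_0))$. The delicate point will be that the overall inequality is strict for intermediate $3 \leq c \leq v-1$, so one must simultaneously verify tightness in the concentration step and in the bilinear bound only at the boundary values $c \in \{2, v\}$, and confirm that the two structurally distinct extremal configurations are the only ones producing $|S|/c = v-1$.
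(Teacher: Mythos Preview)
Your overall strategy is sound and leads to the same characterization as the paper, but there is one slip you should fix. The biconditional ``$(i,j)\notin S$ if and only if $\phi(i)=\psi(j)\neq 0$'' is false as stated: only the forward implication holds, since a vertex $(i,j)$ with $\phi(i)=\psi(j)=\ell$ may still lie in $S$. Consequently, the $\ell$-th component has \emph{at most} $a_\ell b_\ell$ vertices, not exactly, and you only obtain $|S|\ge v^2-\sum_\ell a_\ell b_\ell$. This is harmless for the lower bound (the inequality points the right way), and in the equality analysis you simply pick up the extra condition that each component fills its $a_\ell\times b_\ell$ rectangle; in both extremal cases $c=2$ and $c=v$ this is forced by cardinality, so your conclusion is unaffected.

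Modulo that correction, your argument and the paper's are the same reduction (row/column cliques force each component into a rectangular block, giving $|S|\ge v^2-\sum a_\ell b_\ell$ with $\sum a_\ell,\sum b_\ell\le v$) followed by different arithmetic. The paper first normalizes to $\sum h_i=\sum w_i=v$ by absorbing full rows/columns of $S$ into components, then applies AM--GM ($h_iw_i\le (h_i^2+w_i^2)/2$) and the convexity bound $\sum h_i^2\le (r-1)+(v-r+1)^2$. You instead keep the slack parameters $a_0,b_0$ explicitly, substitute $a_\ell=1+a'_\ell$, and use $\sum a'_\ell b'_\ell\le(\sum a'_\ell)(\sum b'_\ell)$ followed by a bilinear maximization over $(a_0,b_0)\in[0,v-c]^2$. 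Both routes give the same extremal configurations; the paper's AM--GM step has the mild advantage that equality immediately forces $h_i=w_i$ for every $i$, whereas your concentration step requires a short case check to rule out mixed shapes like $(a_1,b_1)=(1,v-1)$, $(a_2,b_2)=(v-1,1)$ when $c=2$. Your approach has the advantage of handling $a_0,b_0>0$ directly rather than via a preliminary WLOG reduction.
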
  
\begin{proof}
Let $S\subset V(L_2(v))$ be a disconnecting set of vertices and let $r=c(G\setminus S)$ and $s=|S|$. We show that $\frac{|S|}{c(L_2(v)\setminus S)}\geq v-1$ with equality if and only if $S$ the complement of a maximum independent set of size $v$ or $S$ is the neighborhood of some vertex of $L_2(v)$. 

Consider the vertices of $L_2(v)$ arranged in a $v\times v$ grid, with two vertices adjacent if and only if they are in the same row or the same column. We may assume that the components of $L_2(v)\setminus S$ are pairwise row disjoint and column disjoint $h_i\times w_i$ rectangular blocks for $1\leq i\leq r$. Indeed, if some component $\mathcal{C}$ with maximum height $h$ and width $w$ was not a rectangular block, we can redefine this component as the rectangular block of height $h$ and width $w$ that contains $\mathcal{C}$. This will not affect any other components in $G\setminus S$ and will decrease $|S|$, and thus will decrease $\frac{|S|}{c(G\setminus S)}$. Obviously, any two vertices from distinct components must be in different rows and different columns. Therefore $2\le r\le v$.   

We may also assume that $\sum_{i=1}^r h_i=\sum_{i=1}^r w_i=v$. Otherwise, if $S$ contains an entire row $r$ (or column $c$) of vertices, we can simply add $w_i$ (or $h_i$) vertices from that row (column) to one of the components $h_i \times w_i$ of $L_2(v)\setminus S$. This will decrease $|S|$ and has no effect on $c(G\setminus S)$, and therefore it will decrease $\frac{|S|}{c(G\setminus S)}$. These facts imply that
\begin{align*}
|S|&=v^2-\sum_{i=1}^r h_iw_i \ge v^2 -\frac{\sum_{i=1}^r h_i^2 + \sum_{i=1}^r w_i^2}{2}\ge v^2- (r-1)-(v-r+1)^2\\
&=(r-1)(2v-r)=r(v-1)+(r-2)(v-r)\\
&\geq r(v-1),
\end{align*}
where the last inequality is true since $2\le r\le v$. This proves $\frac{|S|}{c(G\setminus S)}\geq v-1$. Equality happens above if and only if (after an eventual renumbering of the components) $h_i=w_i=1$ for each $1\leq i\leq r-1, h_r=w_r=v-r+1$ and $r=2$ or $r=v$. This means $S$ is the neighborhood of some vertex of $L_2(v)$ (when $r=2$) or $S$ is the complement of an independent set of size $v$ (when $r=v$). This finishes our proof.
\end{proof}

\subsection{The toughness of the Triangular graphs}{\label{triangular}}

In this section, we determine the exact value of the toughness of the Triangular graph $T_v$ for $v\geq 4$. The graph $T_v$ is the line graph of $K_v$; it is a strongly regular graph with parameters $\left(\binom{v}{2}, 2v-4, v-2, 4\right)$. The spectrum of $T_v$ is $(2v-4)^{(1)}, (v-4)^{(v-1)}, (-2)^{\left((v^2-3v)/2\right)}$. 
\begin{theorem}\label{Tv}
For $v\geq 4, t(T_v)=v-2$. Moreover, $S$ is a disconnecting set of vertices of $T_v$ such that $\frac{|S|}{c(T_v\setminus S)}=v-2$ if and only if $S$ is the neighborhood of a vertex of $T_v$ or $S$ is the complement of a maximum independent set of size $\frac{v}{2}$ (when $v$ is even).
\end{theorem}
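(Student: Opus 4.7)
The plan is to encode each component of $T_v\setminus S$ by the set of $K_v$-vertices that appear in its edges. Viewing the vertices of $T_v$ as $2$-element subsets of $[v]$ and writing $H_1,\dots,H_r$ for the components of $T_v\setminus S$, I would define $V_i\subseteq[v]$ to be the union of the $2$-subsets in $V(H_i)$, and $v_i=|V_i|$. The crucial structural observation to establish first is that the sets $V_1,\dots,V_r$ are pairwise disjoint: if $x\in V_i\cap V_j$ with $i\neq j$, then there exist $e\in V(H_i)$ and $f\in V(H_j)$ both containing $x$, so $e$ and $f$ are adjacent in $T_v$, contradicting that $H_i$ and $H_j$ are distinct components. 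This forces $v_i\geq 2$ and $\sum v_i\leq v$.

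Since $V(H_i)\subseteq\binom{V_i}{2}$, we obtain $|V(H_i)|\leq\binom{v_i}{2}$, and therefore
$$|S|=\binom{v}{2}-\sum_{i=1}^{r}|V(H_i)|\geq\binom{v}{2}-\sum_{i=1}^{r}\binom{v_i}{2}.$$
The core inequality is then
$$\binom{v}{2}-\sum_{i=1}^{r}\binom{v_i}{2}\geq r(v-2),$$
under $v_i\geq 2$, $r\geq 2$, and $\sum v_i\leq v$. Because $x\mapsto\binom{x}{2}$ is convex and increasing for $x\geq 2$, the sum $\sum\binom{v_i}{2}$ is maximized at the extremal allocation in which $r-1$ of the $v_i$'s equal $2$, the remaining one equals $v-2(r-1)$, and $\sum v_i=v$. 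A direct computation at this extremal profile reduces the desired inequality to $(r-2)(v-2r)\geq 0$, which holds since $r\geq 2$ and $v\geq\sum v_i\geq 2r$. This yields $t(T_v)\geq v-2$.

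What I expect to be the most delicate part is the equality analysis needed for the ``only if'' direction. Equality in $(r-2)(v-2r)\geq 0$ forces $r=2$ or $v=2r$; equality at the extremal allocation forces all but one of the $v_i$'s to equal $2$; equality in $|V(H_i)|\leq\binom{v_i}{2}$ forces each $H_i$ to consist of all of $\binom{V_i}{2}$; and $\sum v_i=v$ must also be tight. In the case $r=2$, I would conclude, after relabeling, that $V_1=\{a,b\}$ and $V_2=[v]\setminus\{a,b\}$, with $H_1$ the single vertex $\{a,b\}\in V(T_v)$ and $H_2=T_{v-2}$; then $S$ is precisely the set of edges of $K_v$ meeting $\{a,b\}$ in exactly one endpoint, i.e.\ the neighborhood of $\{a,b\}$ in $T_v$. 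In the case $v=2r$, which requires $v$ even, every $v_i=2$, so $\{V_1,\dots,V_r\}$ is a perfect matching of $[v]$, each $H_i$ is a single isolated vertex of $T_v$, and $S$ is the complement of the corresponding maximum independent set of size $v/2$. The matching upper bound $|S|/c(T_v\setminus S)=v-2$ for both families is immediate, which together with the lower bound finishes both the value and the classification.
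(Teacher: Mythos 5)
Your proposal is correct and follows essentially the same route as the paper: both proofs encode each component of $T_v\setminus S$ by its support set in $[v]$, observe these supports are disjoint of size at least $2$, bound $|S|\geq\binom{v}{2}-\sum_i\binom{v_i}{2}$, and maximize the subtracted sum at the profile $(2,\dots,2,v-2(r-1))$ to reduce to $(r-1)(2v-2r)=r(v-2)+(r-2)(v-2r)\geq r(v-2)$, with the same equality cases $r=2$ and $r=v/2$. The only (cosmetic) difference is that the paper first modifies $S$ so that each component is a full $\binom{V_i}{2}$ and the supports partition $[v]$, whereas you keep $S$ fixed and track the slack in the inequalities, which if anything makes the classification of equality slightly more transparent.
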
  
\begin{proof}
The Triangular graph $T_v$ has independence number $\alpha(T_v)=\lfloor v/2\rfloor$ and attains equality in the Hoffman ratio bound when $v$ is even as $\frac{n(-\lambda_{min})}{k-\lambda_{min}}=\frac{{v\choose 2}2}{2v-4+2}=v/2$. By our discussion at the beginning of the paper, this implies $t(T_v)\leq k/(-\lambda_{min})=v-2$ when $v$ is even. If $S$ is the neighborhood of some vertex of $T_v$, then $T_v\setminus S$ consists of two components: one isolated vertex and a graph isomorphic to $T_{v-2}$. This shows that $t(T_v)\leq (2v-4)/2=v-2$ regardless of the parity of $v$.

Let $S\subset V(T_v)$ be a disconnecting set of vertices and let $r=c(T_v\setminus S)$ and $s = |S|$. We prove that $s/r\geq v-2$ with equality if and only if $S$ the complement of a maximum independent set of size $v$ (when $v$ is even) or $S$ is the neighborhood of some vertex of $T_v$. The vertices of the graph $T_v$ are the $2$-subsets of $[v]=\{1,\dots,v\}$, and two such subsets are adjacent if and only if they share exactly one element of $[v]$. Thus, $2\le r \le \lfloor\frac{v}{2}\rfloor$. For any component $\mathcal{C}$ in $T_v\setminus S$ whose vertices are $2$-subsets of some subset $A\subset [v]$ (where $|A|$ is minimum with this property), we can assume that $\mathcal{C}$ contains all the ${|A| \choose 2}$ $2$-subsets of $A$. This is because including in $\mathcal{C}$ all the $2$-subsets of $|A|$ decreases $|S|$ and does not change $c(T_v\setminus S)$, therefore decreasing $\frac{|S|}{c(T_v\setminus S)}$.

Thus, without loss of generality we may assume that there exist integers $0=a_0<a_1<a_2<\dots <a_{r-1}<a_r=v$ such that $a_j-a_{j-1}\geq 2$, and the vertex set of the $j$-th component of $T_v\setminus S$ is formed by all the $2$-subsets of $\{a_{j-1}+1,a_{j-1}+2,\dots,a_j\}$ for $1\leq j\leq r$. By letting $n_j=a_j-a_{j-1}\geq 2$, we see $\sum_{j=1}^r n_j =v$. Thus,
\begin{align*}
|S|&=\binom{v}{2}-\sum_{j=1}^r \binom{n_j}{2} \ge \binom{v}{2}-\left[\binom{v-2(r-1)}{2}+r-1\right]\\
&=(r-1)(2v-2r)=r(v-2)+(r-2)(v-2r)\\
&\geq r(v-2)
\end{align*}
where the last inequality is true since $2\le r\le \lfloor \frac{v}{2}\rfloor$. The first equality occurs when the components of $T_v\setminus S$ are $r-1$ isolated vertices and a component consists of all the $2$-subsets of a subset of size $v-2(r-1)$ of $[v]$. The final equality occurs when $r=2$ or $r=\frac{v}{2}$. If $r=2$, equality occurs when $S$ is the neighborhood of a vertex. If $r=\frac{v}{2}$, equality occurs when $v$ is even and all the components of $T_v\setminus S$ are isolated vertices, meaning $S$ is the complement of a maximum independent set.
\end{proof}

\subsection{The toughness of the complements of the Triangular graphs}\label{compltriangular}

The complement $\overline{T_v}$ of the Triangular graph $T_v$ has the $2$-subsets of a set $[v]$ with $v$ elements as its vertices, with two subsets being adjacent if and only they are disjoint. It is a strongly regular graph with parameters $\left({v\choose 2}, {v-2\choose 2}, {v-4\choose 2}, {v-3\choose 2}\right)$. The spectrum of $\overline{T_v}$ is ${v-2\choose 2}^{(1)}, 1^{\left((v^2-3v)/2\right)},(3-v)^{(v-1)}$. When $v=5$, $\overline{T_5}$ is the Petersen graph. If $S$ is a maximum independent set, say $S=\{ \{1,2\},\{1,3\}, \{1,4\}, \{1,5\}\}$, then $\frac{|S|}{c(\overline{T_5}\setminus S)} = \frac{4}{3}$ which shows $t(\overline{T_5})\leq \frac{4}{3}$. By a simple case analysis, it can be shown that actually $t(\overline{T_5})=4/3<3/2$.
\begin{theorem}\label{cTv}
For $v\geq 6$, $t(\overline{T_v})=v/2-1=k/(-\lambda_{min})$.  Furthermore, if $S$ is a disconnecting set of vertices in $\overline{T_v}$ such that $\frac{|S|}{c(\overline{T_v}\setminus S)}=v/2-1$, then $S$ is the complement of an independent set of maximum size in $\overline{T_v}$.
\end{theorem}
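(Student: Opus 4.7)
The plan is to show $t(\overline{T_v})\le v/2-1$ and $t(\overline{T_v})\ge v/2-1$, together with the equality characterization. The upper bound follows from the ratio-bound argument in the introduction: a maximum independent set in $\overline{T_v}$ is a star $\{\{i,x\}:x\ne i\}$ of size $v-1$ (attaining equality in the Hoffman ratio bound, since $n(-\lambda_{min})/(k-\lambda_{min})=v-1$), so taking $S$ to be its complement gives $c(\overline{T_v}\setminus S)=v-1$ and $|S|/c=\binom{v-1}{2}/(v-1)=v/2-1$.

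For the lower bound, fix a disconnecting $S$ with components $V_1,\dots,V_r$. Two observations drive the argument: (O1) each $V_i$ is either a singleton or contains two disjoint 2-subsets, since any pairwise-intersecting family of 2-subsets is an independent set in $\overline{T_v}$ and therefore cannot form a connected subgraph with more than one vertex; and (O2) for $i\ne j$, every $A\in V_i$ and every $B\in V_j$ must intersect, otherwise they would be adjacent in $\overline{T_v}$ and lie in the same component. In Case A all $V_i$ are singletons $\{B_i\}$; the $B_i$'s are pairwise intersecting 2-subsets, so by the standard classification either they all share a common element (sub-star, hence $r\le v-1$ and $|S|/r=\binom{v}{2}/r-1\ge v/2-1$ with equality iff $r=v-1$, making $S$ the complement of a maximum independent set), or $r=3$ and they form a triangle $\{\{a,b\},\{a,c\},\{b,c\}\}$, in which case $|S|/r=(\binom{v}{2}-3)/3>v/2-1$ for $v\ge 5$ by direct computation.

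Case B assumes some component, WLOG $V_1$, is non-singleton; choose disjoint $A,A'\in V_1$ and let $U_0=A\cup A'$, a 4-element set. The key structural lemma is that $\bigcup_{j\ne 1}V_j\subseteq\binom{U_0}{2}$: any $B\in V_j$ meets both $A$ and $A'$ by (O2), hence has one element in each and $B\subseteq U_0$. Since $\overline{T_v}[\binom{U_0}{2}]$ is a disjoint union of three edges (the three perfect matchings of $U_0$), each $V_j$ for $j\ge 2$ lies in one matching. If $V_1\subseteq\binom{U_0}{2}$ as well (Subcase B1), the same reasoning forces $V_1=\{A,A'\}$, and the best case has $V_1,V_2,V_3$ being the three matchings, giving $|V|\le 6$, $r\le 3$, and $|S|/r\ge(\binom{v}{2}-6)/3$, which exceeds $v/2-1$ exactly when $v^2-4v-6>0$, i.e.\ for $v\ge 6$. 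Otherwise (Subcase B2), $V_1$ has a pair $D=\{d,x\}$ with $d\in U_0$ and $x\notin U_0$ (a pair entirely outside $U_0$ would force $\bigcup_{j\ne 1}V_j=\emptyset$), so every $B\in V_j$ must contain $d$; this confines $\bigcup_{j\ne 1}V_j$ to at most two pairs in $\binom{U_0}{2}$ through $d$ and distinct from $A,A'$, and since these two pairs share $d$ they are non-adjacent in $\overline{T_v}$, giving $r\le 3$ with each $V_j$ ($j\ge 2$) a singleton.

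The main obstacle I anticipate is the careful bound on $|V_1|$ in Subcase B2: each element of $V_1$ must simultaneously intersect every element of every $V_j$ (by cross-intersection) and be non-adjacent in $\overline{T_v}$ to every $V_j$ element (so as to remain in a distinct component). A direct enumeration of the admissible 2-subsets should give $|V_1|\le v-2$ when $r=3$ and $|V_1|\le 2v-4$ when $r=2$, leading to $|S|/r\ge v(v-3)/6$ and $(v-2)(v-3)/4$ respectively, both strictly exceeding $v/2-1$ for $v\ge 6$ (equivalent to $v^2-6v+6>0$ and $v-3>2$). Combining all cases yields $|S|/r\ge v/2-1$, with equality only in Case A's sub-star subcase at $r=v-1$, which is precisely when $S$ is the complement of a maximum independent set.
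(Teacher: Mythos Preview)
Your proof is correct and takes a genuinely different route from the paper's. The paper invokes the Brouwer--Mesner theorem (vertex-connectivity of a connected strongly regular graph equals its degree) to get $|S|\ge\binom{v-2}{2}$, and then splits on the size of $r$: for $r\le v-3$ this lower bound on $|S|$ alone gives the inequality, while for $r\in\{v-2,v-1\}$ one picks a transversal of the components, observes it is an intersecting family of size $>3$ and hence a star, and concludes the components are isolated vertices. Your argument avoids Brouwer--Mesner entirely. Your Case~A (all components singletons) coincides with the paper's large-$r$ analysis via the sunflower dichotomy for intersecting $2$-subsets. Your Case~B replaces the black-box connectivity bound by the elementary observation that a non-singleton component contains a disjoint pair $A,A'$, whose union $U_0$ then traps all other components inside $\binom{U_0}{2}$, forcing $r\le 3$; the remaining estimates on $|V_1|$ (namely $|V_1|\le v-2$ when $r=3$ and $|V_1|\le 2v-4$ when $r=2$) follow from cross-intersection with the singleton components and are exactly what you claim. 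The trade-off: the paper's proof is shorter and uniform once Brouwer--Mesner is granted, while yours is fully self-contained and exposes the combinatorial reason the ratio cannot drop---the obstruction to many components is precisely the four-point ``frame'' $U_0$ coming from any edge of $\overline{T_v}$.
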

\begin{proof}
Let $S\subset V(\overline{T_v})$ be a disconnecting set of vertices and let $r=c(\overline{T_v}\setminus S)$ and $s=|S|$. We prove that $s/r\geq v/2-1$ with equality if and only if $S$ the complement of a maximum independent set of size $v-1$. As $S$ is a disconnecting set of vertices, we get that $|S|\geq {v-2\choose 2}$ with equality if and only if $S$ is the neighborhood of some vertex of $\overline{T_v}$ (see \cite{BM}). If $|S|={v-2\choose 2}$, then $\overline{T_v}\setminus S$ has two components: one isolated vertex and a subgraph of $\overline{T_v}$ isomorphic to the complement of a perfect matching in a complete bipartite graph on $2v-4$ vertices. In this case, we get that $\frac{|S|}{c(\overline{T_v}\setminus S)}=\frac{\binom{v-2}{2}}{2} =\frac{(v-2)(v-3)}{4}>\frac{v}{2}-1$ as $v\geq 6$.

If $|S|>\binom{v-2}{2}$, then $\frac{|S|}{c(\overline{T_v}\setminus S)}\geq \frac{\binom{v-2}{2}+1}{r}=\frac{v^2-5v+8}{2r}$. If $r\leq v-3$, then $\frac{v^2-5v+8}{2r}>v/2-1$, and we are done in this case. Also, if $r=v-1$, then the set formed by picking a single vertex from each component of $\overline{T_v}\setminus S$ is an independent set of maximum size $v-1$, and therefore it consists of all $2$-subsets containing some fixed element of $[v]$. This implies that $\overline{T_v}\setminus S$ is actually an independent set of size $v-1$ and we obtain equality $\frac{|S|}{c(\overline{T_v}\setminus S)}=\frac{{v\choose 2}-(v-1)}{v-1}=v/2-1$ in this case. If $r=v-2>3$, then the set formed by picking a single vertex from each component of $\overline{T_v}\setminus S$ is an independent set of size $v-2>3$. This independent set consists of $v-2$ $2$-subsets containing some fixed element of $[v]$. This implies that the components of $\overline{T_v}\setminus S$ must be $v-2$ isolated vertices and therefore, $\frac{|S|}{c(\overline{T_v}\setminus S)}=\frac{{v\choose 2}-(v-2)}{v-2}>v/2-1$. This finishes our proof.
\end{proof}


\subsection{The toughness of the complements of the point-graphs of generalized quadrangles}\label{gqst}

A generalized quadrangle is a point-line incidence structure such that any two points are on at most one line and if $p$ is a point not on a line $L$, then there is a unique point on $L$ that is collinear with $p$. If every line contains $s+1$ points and every point is contained in $t+1$ lines, then we say the generalized quadrangle has order $(s,t)$. The point-graph of a generalized quadrangle is the graph with the points of the quadrangle as its vertices, with two points adjacent if and only if they are collinear. The point-graph of a generalized quadrangle of order $(s,t)$ is a strongly regular graph with parameters $((s+1)(st+1),s(t+1),s-1,t+1)$ (see e.g. \cite[Section 10.8]{GR}). The complement of the point-graph of a generalized quadrangle of order $(s,t)$ is a strongly regular graph with parameters $((s+1)(st+1), s^2t, t(s^2+1)-s(t+1), st(s-1))$. Its distinct eigenvalues are $s^2t, t$, and $-s$ with respective multiplicities $1, \frac{st(s+1)(t+1)}{s+t}, \frac{s^2(st+1)}{s+t}$. Brouwer \cite{Br1} observed that the toughness of the complement of the point-graph of a generalized quadrangle of order $(q,q)$ is between $q^2-2$ and $q^2$. Our next theorem improves this result.
\begin{theorem}
If $G$ is the complement of the point-graph of a generalized quadrangle of order $(s,t)$, then $t(G)=st$. Moreover, $S$ is a disconnecting set of $G$ such that $\frac{|S|}{c(G\setminus S)}=st$ if and only if $S$ is the complement of a maximum independent set or $S$ is the neighborhood of a vertex (when $s=2$).  
\end{theorem}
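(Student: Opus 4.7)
For the upper bound $t(G)\leq st$, I would exhibit two explicit disconnecting sets attaining the ratio $st$. First, any line $L$ of the generalized quadrangle is a set of $s+1$ pairwise collinear points, hence a maximum independent set of $G$ attaining the Hoffman ratio bound. Taking $S=V(G)\setminus L$ gives $|S|=(s+1)st$ and $c(G\setminus S)=s+1$, so $|S|/c(G\setminus S)=st$. Second, when $s=2$, for any vertex $v$ of $G$ the set $S=N_G(v)$ has size $s^2t$, and $G\setminus S$ splits into the isolated vertex $v$ together with the $s(t+1)$ points collinear with $v$; the latter induce in $G$ a complete $(t+1)$-partite graph with parts of size $s$ (each part being a line through $v$ minus $v$), which is connected. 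Thus $c(G\setminus S)=2$ and $|S|/c(G\setminus S)=s^2t/2=st$.

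For the lower bound, let $S\subset V(G)$ be any disconnecting set and let $C_1,\ldots,C_r$ be the components of $G\setminus S$. The key structural observation is that for $i\neq j$, every point of $C_i$ is collinear in the quadrangle with every point of $C_j$; otherwise the two points would form an edge in $G$ and hence belong to the same component. Combining this with the standard generalized quadrangle axiom that three pairwise collinear points lie on a common line (equivalently, the uniqueness of the point on a line collinear with an external point) sharply restricts how the $C_i$'s can be arranged.

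When $r\geq 3$, fix representatives $p_1\in C_1$, $p_2\in C_2$, $p_3\in C_3$; they are pairwise collinear and hence lie on a unique line $L$. For any $q\in\bigcup_i C_i$, choosing two of these representatives from components other than the one containing $q$ produces a pairwise collinear triple with $q$, which must lie on a line through those two representatives, forcing that line to be $L$ and $q\in L$. Thus $\bigcup_i C_i\subseteq L$, and since points of $L$ are pairwise non-adjacent in $G$, each $C_i$ is a singleton. So $r=|V(G)\setminus S|\leq s+1$ and $|S|/r=((s+1)(st+1)-r)/r\geq st$, with equality iff $r=s+1$, in which case $V(G)\setminus S$ is a full line and $S$ is the complement of a maximum independent set of $G$.

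The case $r=2$ is the main obstacle and requires more careful analysis of the two components $C_1,C_2$. The plan is to show $|V(G)\setminus S|\leq s(t+1)+1$ in this case, which yields $|S|/r\geq s^2t/2\geq st$ for $s\geq 2$. If one component is a single point $q$, then the other lies in the set of $s(t+1)$ points collinear with $q$, giving the bound; equality forces $s=2$, the other component to be the full set of points collinear with $q$, and $S=N_G(q)$. If both components have at least two points, I would split on whether $C_1$ contains a pair of collinear points on a line $L$: if so, $C_2\subseteq L$ (any point of $C_2$ collinear with two distinct points of $L$ must itself lie on $L$ by uniqueness), and moreover $C_1\subseteq L$ since any point of $C_1$ off $L$ would be collinear with at most one point of $L$, contradicting $|C_2|\geq 2$; this gives $|V(G)\setminus S|\leq s+1$. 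If instead $C_1$ has no collinear pair, a pair of non-collinear points in $C_1$ has exactly $t+1$ common neighbors in the point-graph, so $|C_2|\leq t+1$; a symmetric argument (a collinear pair in $C_2$ would force $C_1$ onto a single line, contradicting the pairwise non-collinearity of $C_1$ when $|C_1|\geq 2$) gives $|C_1|\leq t+1$, so $|V(G)\setminus S|\leq 2(t+1)\leq s(t+1)+1$. In both sub-branches the inequality $|S|/r\geq st$ is strict for $s\geq 2$, so equality in the case $r=2$ occurs only in the first scenario with $s=2$ and $S=N_G(q)$, completing the characterization of the equality cases.
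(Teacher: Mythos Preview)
Your proof is correct. The argument for $r\geq 3$ is essentially the paper's argument, just phrased via three representatives on a common line rather than via the configuration ``two isolated vertices and an edge''; both reduce to the fact that three pairwise collinear points of a generalized quadrangle lie on a line.

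The genuine difference is in the $r=2$ case. The paper dispatches it in one line by invoking the Brouwer--Mesner theorem (the vertex-connectivity of a connected strongly regular graph equals its degree, and the minimum disconnecting sets are exactly vertex neighborhoods), which immediately gives $|S|\geq s^2t$ and hence $|S|/2\geq s^2t/2\geq st$, with equality forcing $s=2$ and $S=N_G(q)$. You instead prove the required bound $|V(G)\setminus S|\leq s(t+1)+1$ by a direct combinatorial case analysis using only the generalized quadrangle axioms. Your approach is more self-contained and elementary, but longer; the paper's is shorter at the price of citing a nontrivial external result. One small remark: in your sub-branch where both components have size at least two and $C_1$ contains a collinear pair, your argument in fact shows this sub-branch is vacuous (once $C_1,C_2\subseteq L$, both are independent in $G$, so neither can be a component of size $\geq 2$), which is slightly stronger than the bound $|V(G)\setminus S|\leq s+1$ you record.
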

\begin{proof}
Since a generalized quadrangle is triangle-free, the points of an independent set of size at least $3$ in $G$ must be on a line in $GQ(s,t)$. Hence, $\alpha(G)=s+1$, which meets the Hoffman ratio bound $\frac{n(-\lambda_{min})}{k-\lambda_{min}}=\frac{(s+1)(st+1)s}{s^2t+s}=s+1$. Taking $S$ to be the complement of a maximum independent set, we conclude that
\begin{equation}
t(G)\leq \frac{(s+1)(st+1)-(s+1)}{s+1} = st.
\end{equation}
Let $S$ be a disconnecting set of vertices of $G$. We claim that if $G\setminus S$ contains at least three components, then the components of $G\setminus S$ must be isolated vertices. Otherwise, $G\setminus S$ would contain an induced subgraph with two isolated vertices, $v_1$ and $v_2$, and an edge $v_3v_4$. This would imply that $v_1$ and $v_2$ are on two different lines in $GQ(s,t)$, a contradiction.  

Therefore, if $c(G\setminus S)\geq 3$, then $G\setminus S$ consists of at most $s+1$ isolated vertices. 
Thus, $\frac{|S|}{c(G\setminus S)}=\frac{(s+1)(st+1)-c(G\setminus S)}{c(G\setminus S)}\geq st$, with equality if and only if $S$ is the complement of an independent set of maximum size in $G$. The only remaining case is when $G\setminus S$ has exactly two components. Brouwer and Mesner \cite{BM} proved that the vertex-connectivity of any connected strongly regular graph equals its degree and the only disconnecting sets are neighborhoods of vertices. This implies that $|S|\geq s^2t$ when $c(G\setminus S)=2$. Thus, $\frac{|S|}{c(G\setminus S)}\geq \frac{s^2t}{2}\geq st$ with equality if and only $S$ is the neighborhood of a vertex and $s=2$. This finishes our proof.
\end{proof}
We recall here the fact (see e.g. \cite[Section 10.8]{GR}) that $s=2$ implies $t\in \{1,2,4\}$. The complement of the point-graph of $GQ(2,1)$ is the Lattice graph $L_2(3)$. The complement of the point-graph of $GQ(2,2)$ is the triangular graph $T_6$. The complement of the point-graph of $GQ(2,4)$ is the Schl\"{a}fli graph.

\section{Final Remarks}

In this paper, we improved some spectral lower bounds of Alon \cite{Al}, Brouwer \cite{Br} and Liu and Chen \cite{LC} for the toughness of a connected $k$-regular graph in certain ranges of the toughness. Following the suggestion of one of the referees, we remark here that while a connected $k$-regular graph with small $\lambda$ has large toughness, the converse is not true. One can construct easily circulant claw-free graphs (certain Cayley graphs of finite cyclic groups) that would have toughness equal to half their valency (by Mathews-Sumner result \cite{MS}) while $\lambda$ would be very close to $k$ (see e.g. \cite{CioabaComptes}).

We also showed that with the exception of the Petersen graph, the toughness of many strongly regular graphs attaining equality in the Hoffman ratio bound for the independence number, equals $k/(-\lambda_{min})$, thus confirming Brouwer's belief \cite{Br1}. It would be interesting to extend our results and determine the toughness of other strongly regular graphs such as Latin square graphs or block graphs of Steiner systems.

The Kneser graph $K(v,r)$ has as vertex set the $r$-subsets of set with size $v$ and two $r$-subsets are adjacent if and only if they are disjoint. The Erd\H{o}s-Ko-Rado theorem is a fundamental result in extremal graph theory stating that for $v\geq 2r+1$, the independence number of the Kneser graph $K(v,r)$ is ${v-1\choose r-1}$ and any independent set of maximum size consists of all $r$-subsets containing a fixed element. The independence number of the Kneser graph can be obtained using eigenvalue methods as 
the Kneser graphs $K(v,r)$ attain equality in the Hoffman ratio bound (see e.g. \cite[Sections 7.8 and 9.6]{GR}). We leave the detailed calculation of the toughness of Kneser graphs $K(v,r)$ for $v\geq 2r+1$ and $r\geq 3$ for a future work \cite{W1}. Another interesting problem would be determining whether or not there exists a regular graph except the Petersen graph, attaining equality in Hoffman ratio bound and having toughness not equal to $k/(-\lambda_{min})$.

\section*{Acknowledgments}

We are grateful to the anonymous referees whose comments and suggestions have greatly improved the quality of our paper. This work was partially supported by a grant from the Simons Foundation ($\#209309$ to Sebastian M. Cioab\u{a}) and by National Security Agency grant H98230-13-1-0267. This work was partially completed when the second author was a graduate student at the University of Delaware and is part of his Ph.D. Thesis \cite{WongThesis}.

\end{document}